\def\claim#1{\begin{trivlist}\item[\hskip\labelsep\bf#1]\it}
\def\endclaim{\end{trivlist}}
\numberwithin{equation}{section}
\newtheorem{pr}[theorem]{Proposition}
\newtheorem{remark}[theorem]{Remark}
\newtheorem{assumption}[theorem]{Assumption}
\newcommand{\eproof}{{\mbox{\ }~\hfill
\mbox{\large $\Box$} \par \vskip 10pt}}
\newcommand{\pf}{\noindent{\bf Proof}}
\title{Resolvent Estimates for Viscoelastic Systems of Extended Maxwell Type and their Applications}
\author{Maarten V. de Hoop\thanks{Simons Chair in Computational and Applied Mathematics and Earth Science, Rice University, Houston TX, USA (mdehoop@rice.edu). Supported by the Simons Foundation under the MATH + X program, the National Science Foundation under grant DMS-2108175, and the corporate members of the Geo-Mathematical Imaging Group at Rice University.} \and
  Masato Kimura\thanks{Faculty of Mathematics and Physics, Kanazawa University, Kanazawa 920-1192, Japan (mkimura@se.kanazawa-u.ac.jp).} \and
  Ching-Lung Lin\thanks{Department of Mathematics, National Cheng-Kung University, Tainan 701, Taiwan (cllin2@mail.ncku.edu.tw). Partially supported by the Ministry of Science and Technology of Taiwan.} \and
  Gen Nakamura\thanks{Department of Mathematics, Hokkaido University, Sapporo 060-0808, Japan and Research Institute of Electronic Science, Hokkaido University, Sapporo 060-0812, Japan (gnaka@math.sci.hokudai.ac.jp). Partially supported by grant-in-aid for Scientific Research (22K03366) of the Japan Society for the Promotion of Science.}}
\date{}
\begin{document}

\maketitle

\begin{abstract}
In the theory of viscoelasticity, an important class of models admits a representation in terms of springs and dashpots. Widely used members of this class are the Maxwell model and its extended version. This paper concerns resolvent estimates for the system of equations for the anisotropic, extended Maxwell model, abbreviated as the EMM, and its marginal realization which includes an inertia term; special attention is paid to the introduction of augmented variables. This leads to the augmented system that will also be referred to as the ``original'' system. A reduced system is then formed which encodes essentially the EMM; it is a closed system with respect to the particle velocity and the difference between the elastic and viscous strains. Based on resolvent estimates, it is shown that the original and reduced systems generate $C_0$-groups and the reduced system generates a $C_0$-semigroup of contraction. Naturally, the EMM can be written in integrodifferential form leading explicitly to relaxation and a viscoelastic integro-differential system. However, there is a difference between the original and integrodifferential systems, in general, with consequences for whether their solutions generate semigroups or not. Finally, an energy estimate is obtained for the reduced system, and it is proven that its solutions decay exponentially as time tends to infinity. The limiting amplitude principle follows readily from these two results.
\end{abstract}

\begin{keywords}
  viscoelasticity, anisotropy, resolvent estimates, limiting amplitude principle
\end{keywords}

\begin{AMS}
  codes
\end{AMS}

\section{Introduction}\label{sec1}

Let $\Omega\subset\mathbb{R}^{d}$ with $d=2,3$ be a bounded domain on which a spring-dashpot model of a viscoelastic medium is defined. We assume that its boundary $\partial\Omega$ is connected and Lipschitz smooth. We divide $\partial\Omega$ into $\partial\Omega = \overline{\Gamma_D}\cup\overline{\Gamma_N}$, where $\Gamma_D,\,\Gamma_N\subset\partial\Omega$ are connected open sets and we assume that $\Gamma_D\not=\emptyset$, $\Gamma_D\cap\Gamma_N=\emptyset$ and if $d=3$, then their boundaries $\partial\Gamma_D,\,\partial\Gamma_N$ are Lipschitz smooth. We emphasize that the setup with $\partial\Omega$, $\Gamma_D$, $\Gamma_N$ underpins the consideration of the so-called mixed type boundary condition. Our analysis extends to the case where $\partial\Omega$ consists of several connected components and $\Gamma_D$, $\Gamma_N$ are unions of these components.

Let $x\in\Omega$ be a point in space and $t\in{\mathbb R}$ be time. For each $1\le j\le n$ with a fixed $n\in{\mathbb N}$, let $C_j=C_j(x)$ be a stiffness tensor and $\phi_j=\phi_j(x,t)$  be a tensor describing the effect of viscosity; these are rank $4$ and rank $2$ tensors, respectively. Further, let $\rho=\rho(x)$ be the density defined on $\Omega$ and each $\eta_j=\eta_j(x)$, $1\le j\le n$ be the viscosity of the $j$-th dashpot, respectively.

\medskip
Throughout this paper, the assumptions for $C_j$, $\eta_j$ and $\rho$ are as follows.

\begin{assumption}\label{assumption}${}$
\newline
{\rm(i)}
$C_j,\,\eta_j,\,\,\rho\in L^\infty(\Omega)$.

\medskip
\noindent
{\rm (ii)} (full symmetry) $(C_j)_{klrs}=(C_j)_{rskl }=(C_j)_{klsr},\,\,j\le k,l,r,s\le d$\,\,\text{a.e. in $\Omega$}.

\medskip
\noindent
{\rm (iii)} (strong convexity) There
exists a constant $\alpha_0>0$ such that for any $d\times d$ symmetric matrix $w=(w_{kl})$
\begin{equation}\label{1.4}
(C_jw)w\ge\alpha_0|w|^2,\,\,
\,\,\text{a.e. in $\Omega$},
\end{equation}
where the contraction $C_j w$ of the rank  $4$ tensor $C_j$ with  rank $2$ tensor $w$, and the contraction $(C_j w)w$ of the rank  $2$ tensors $C_j w$ and $w$ are defined as $(C_jw)w=\sum_{k,l=1}^d \left(\sum_{r,s=1}^d(C_j)_{klrs}w_{rs}\right)w_{kl}$. 

\medskip
\noindent
{\rm(iv)} There exist $\beta_0>0$ and $\gamma_0>0$ such that
$$
\eta_j\ge \beta_0,\,1\le j\le n\,\,\text{and}\,\,
\rho\ge \gamma_0
$$
\text{ a.e. in $\Omega$}. 
\end{assumption}

\begin{figure}[h]  
	\centering 

%	\def\myscale{1.5} %1.5
%	\begin{subfigure}[c]{0.4\linewidth}
%		\begin{tikzpicture}[scale=\myscale,transform shape,every node/.style={outer sep=0pt},thick,
%				spring/.style={thick,decorate,decoration={zigzag,pre length=6*\myscale,
%				post length=6*\myscale,segment length=14*\myscale,amplitude=10*\myscale}},
%				dampic/.pic={\fill[white] (-0.1,-0.3) rectangle (0.3,0.3);
%					\draw (-0.3,0.3) -| (0.3,-0.3) -- (-0.3,-0.3);
%				\draw[line width=1mm] (-0.1,-0.3) -- (-0.1,0.3);}
%			]

%			\foreach \i\j\k\l in {1/i/-1.5/2.5cm}
%			{
%				\node[coordinate] 
%				at (0,\k)
%				(localcenter\i) {};

%				\node (rect) [rectangle,draw,dotted,minimum width=4cm,minimum height=1.2cm,
%				] at (localcenter\i) {};
%				\node [label={[label distance=1.8cm]5:$M_\j$}] at (localcenter\i) {};

%				\node[coordinate,
%				right=\l of localcenter\i] (localright\i) {};

%				\node[coordinate,
%				left=2cm of localcenter\i] (localmidleft\i) {};

%				\node[coordinate,
%				left=\l of localcenter\i] (localleft\i) {};

%				\draw[thick] (localmidleft\i) to (localleft\i);

%				\draw[spring] ([yshift=0mm]localmidleft\i.east) coordinate(aux)
%				-- (localcenter\i.west|-aux) node[midway,above=1mm]{};

%				\draw ([yshift=-0mm]localcenter\i.east) coordinate(aux')
%				-- 
%				(localright\i.west|-aux') pic[midway]{dampic} node[midway,below=3mm]{};
%			}

%		\end{tikzpicture}%
%		\caption{Maxwell model} \label{fig:a-MM}  
%	\end{subfigure}\hfill%

\def\myscale{1.2} %1.0
%	\begin{subfigure}[c]{0.4\linewidth}
		\begin{tikzpicture}[scale=\myscale,transform shape,every node/.style={outer sep=0pt},thick,
				spring/.style={thick,decorate,decoration={zigzag,pre length=6*\myscale,
				post length=6*\myscale,segment length=14*\myscale,amplitude=10*\myscale}},
				dampic/.pic={\fill[white] (-0.1,-0.3) rectangle (0.3,0.3);
					\draw (-0.3,0.3) -| (0.3,-0.3) -- (-0.3,-0.3);
				\draw[line width=1mm] (-0.1,-0.3) -- (-0.1,0.3);}
			]
			\foreach \i\j\k\l in {01/1/0/3cm,02/2/-1.5/3cm,99/n/-3.5/3cm}
			%\foreach \i\j\k\l in {1/i/-1.5/2.5cm}
			{
				\node[coordinate] 
				at (0,\k)
				(localcenter\i) {};

				\node (rect) [rectangle,draw,dotted,minimum width=4cm,minimum height=1.2cm,
				] at (localcenter\i) {};
				\node [label={[label distance=1.8cm]5:$M_\j$}] at (localcenter\i) {};

				\node[coordinate,
				right=\l of localcenter\i] (localright\i) {};

				\node[coordinate,
				left=2cm of localcenter\i] (localmidleft\i) {};

				\node[coordinate,
				left=\l of localcenter\i] (localleft\i) {};

				\draw[thick] (localmidleft\i) to (localleft\i);

				\draw[spring] ([yshift=0mm]localmidleft\i.east) coordinate(aux)
				-- (localcenter\i.west|-aux) node[midway,above=1mm]{};

				\draw ([yshift=-0mm]localcenter\i.east) coordinate(aux')
				-- 
				(localright\i.west|-aux') pic[midway]{dampic} node[midway,below=3mm]{};
			}
			\draw [shorten >=0.8cm,shorten <=0.8cm,loosely dotted,thick] (localcenter02) to (localcenter99);
			\draw [shorten >=0.8cm,shorten <=0.8cm,loosely dotted,thick] (localright02) to (localright99);
			\draw [shorten >=0.8cm,shorten <=0.8cm,loosely dotted,thick] (localleft02) to (localleft99);
			\draw [shorten >=0.0cm,shorten <=-0.7cm,thick] (localleft02) to (localleft01);
			\draw [shorten >=0.0cm,shorten <=-0.7cm,thick] (localright02) to (localright01);
			\draw [shorten >=0.0cm,shorten <=1.3cm,thick] (localleft02) to (localleft99);
			\draw [shorten >=0.0cm,shorten <=1.3cm,thick] (localright02) to (localright99);
			\draw[thick] ($(localright02)!-0.2!(localcenter02)$) to (localright02);
			\draw[thick] ($(localleft02)!-0.2!(localcenter02)$) to (localleft02);
		\end{tikzpicture}%
%		\caption{Extended Maxwell model} \label{fig:b-EMM}  
%	\end{subfigure}
	\caption{Extended Maxwell model and its one unit called the Maxwell model, where the zigzag and piston describe a spring and dashpot, respectively. Here, $M_j$ denotes the $j$-th Maxwell constituent. }\label{fig:EMM}
\end{figure}
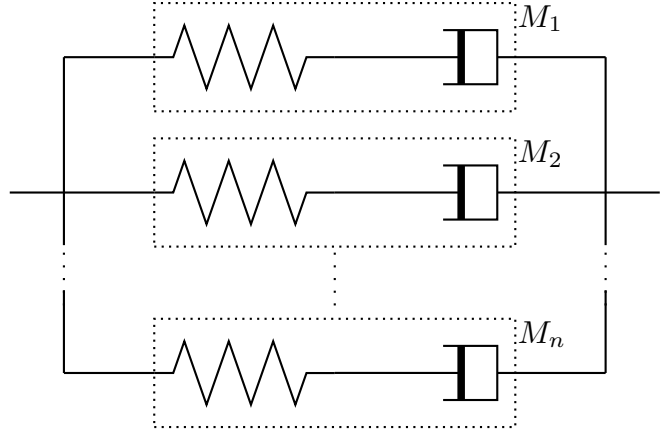  

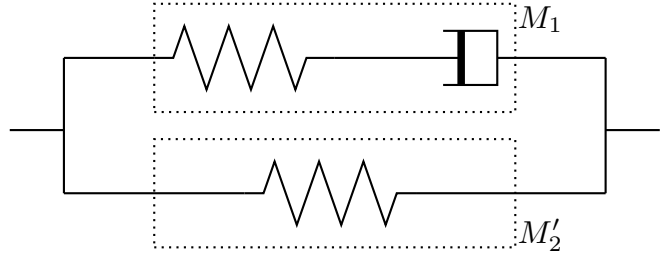
\begin{figure}[h]  
	\centering 
	\def\myscale{1.2}
	\begin{tikzpicture}[scale=\myscale,transform shape,every node/.style={outer sep=0pt},thick,
			spring/.style={thick,decorate,decoration={zigzag,pre length=6*\myscale,
			post length=6*\myscale,segment length=14*\myscale,amplitude=10*\myscale}},
			dampic/.pic={\fill[white] (-0.1,-0.3) rectangle (0.3,0.3);
				\draw (-0.3,0.3) -| (0.3,-0.3) -- (-0.3,-0.3);
			\draw[line width=1mm] (-0.1,-0.3) -- (-0.1,0.3);}
		]
		\foreach \i\j\k\l in {01/1/0/3cm,05/XX/-0.8/3cm,09/2/-1.5/3cm}
		{
			\node[coordinate] 
			at (0,\k)
			(localcenter\i) {};

			\node[coordinate,
			right=\l of localcenter\i] (localright\i) {};

			\node[coordinate,
			left=2cm of localcenter\i] (localmidleft\i) {};

			\node[coordinate,
			left=\l of localcenter\i] (localleft\i) {};

		}

		\node (rect) [rectangle,draw,dotted,minimum width=4cm,minimum height=1.2cm,
		] at (localcenter01) {};
		\node [label={[label distance=1.8cm]5:$M_1$}] at (localcenter01) {};

		\draw[thick] (localmidleft01) to (localleft01);

		\draw[spring] ([yshift=0mm]localmidleft01.east) coordinate(aux)
		-- (localcenter01.west|-aux) node[midway,above=1mm]{};

		\draw ([yshift=-0mm]localcenter01.east) coordinate(aux')
		-- 
		(localright01.west|-aux') pic[midway]{dampic} node[midway,below=3mm]{};

		\node (rect) [rectangle,draw,dotted,minimum width=4cm,minimum height=1.2cm,
		] at (localcenter09) {};
		\node [label={[label distance=1.8cm]-5:$M_2^\prime$}] at (localcenter09) {};

		\draw[thick] ($(localmidleft09)!0.5!(localcenter09)$) to (localleft09);

		\draw[spring] ($(localmidleft09)!0.5!(localcenter09)$) coordinate(aux)
		-- ($(localright09)!0.7!(localcenter09)$) node[midway,above=1mm]{};

		\draw[thick] ($(localright09)!0.7!(localcenter09)$) to (localright09);

		\draw[thick] ($(localright05)!-0.2!(localcenter05)$) to (localright05);
		\draw[thick] ($(localleft05)!-0.2!(localcenter05)$) to (localleft05);

		\draw [shorten >=0.0cm,shorten <=-0.0cm,thick] (localleft09) to (localleft01);
		\draw [shorten >=0.0cm,shorten <=-0.0cm,thick] (localright09) to (localright01);
	\end{tikzpicture}%
	\caption{Standard linear solid model} \label{fig:SLSM}  
\end{figure}  

\medskip

Historically, viscoelasticity is introduced through relaxation leading to systems of integrodifferential equations, which we refer to as VID systems \cite{C, Lakes}. In the case of special, parametric models, representable by springs and dashpots \cite{Lakes}, augmented variables can be introduced to cast the systems of integrodifferential equations into systems of differential equations. This alternative mathematical description affects the meaning of initial values. This also affects whether the solutions form a semigroup. Here, we will present the analysis for, and clarify the properties of systems associated with the so-called Extended Maxwell Model (EMM), see Figure~\ref{fig:EMM}. This model has been commonly used in geophysics \cite{Peltier_1974}, for example. To accommodate general anisotropy, we introduce its tensor form. We will also consider the related Extended Standard Linear Solid Model (ESLSM), see Figure~\ref{fig:SLSM} for an illustration of one unit. In the case of the EMM, we will prove that the solutions of the integrodifferential system (ID) do not generate a semigroup, but that the solutions of the augmented system of differential equations (AD) do generate not only a semigroup but also a group. The integrodifferential system generates exponentially decaying solutions; exponentially decaying solutions are generated by the AD only upon a reduction of the system eliminating quasi-static modes. In fact, the proofs make use of distinct energy functions. We will present the proof for the ID system in a companion paper \cite{IDUD_2023}. In various remarks, we will indicate which results apply to the ESLSM.

As a direct application of the result on exponential decay in time, we show that the solutions of the reduced system satisfy the limiting amplitude principle. That is, if a time-harmonic vibration is given on a part of the boundary, this principle implies how fast solutions of this system converge to time-harmonic solutions. We note that the time-harmonic vibration given on the boundary has a transitional period of time before becoming a time-harmonic vibration. If this convergence is fast, one can quickly generate many time-harmonic solutions by switching the frequency of the time-harmonic vibration given on a part of the boundary. This principle has been accepted without mathematical proof in applications. In this paper, we give a mathematical justification of this principle. For the Kelvin-Voigt model, which can be directly described as a system of differential equations, the exponential decay of solutions was proven before. 
%The system without inertia consists of an elastic part and a similar part with respect to the velocity. The latter part, gives a strong damping effect to solutions. 
By using the inner product introduced in \cite{Potier}, it follows that this system generates a holomorphic semigroup and has the mentioned decay of solutions. Based on these results the limiting amplitude was proven when the medium is isotropic in \cite{Higashimori} and \cite{Jiang} for the one space dimensional case and the three space dimensional case, respectively. Applications of the limiting amplitude principle appear, for example, in magnetic resonance elastography (MRE) \cite{Muthupillai} and in exploration seismology with vibroseis. In MRE, one uses a special pulse echo sequence of MRI, and measures the time-harmonic quasi-shear wave inside a tissue of an organ generated by a time-harmonic vibration at its surface; this was analyzed by Papazoglou \cite{Papazoglou} using a ESLSM.
%by using an isotropic springpot model. Here, the springpot is an interpolation between a spring and a dashpot,  and it can numerically approximate an isotropic ESLS model which consists of several units involving a spring parallelly connected with a unit of an isotropic Maxwell model (\cite{Bonfanti}). This could be one clear application of the limiting amplitude principle.

The remainder of the paper is organized as follows. In Section 2, we introduce the AD system and carry out a preliminary study of the associated system in the Laplace domain which we refer to as the $\lambda-$system or equation. In Section 3, we analyze the relation between the ID system and the AD system and what causes their difference. We show the existence of a $C_0$-group for the AD system in Section 5 by deriving the Hille-Yoshida type estimate for the resolvent in Section 4. To show the limiting amplitude principle for the AD system, we need to have at least the following two conditions to be satisfied. They are that the resolvent set contains the imaginary axis of the complex plane and that solutions of the AD system have a uniform decaying property, for example, polynomial or exponential as $t \rightarrow \infty$. However, these conditions do not hold for the AD system due to the existence of a stationary solution. Hence, we introduce a reduced system (RD), and then prove that it generates a $C_0$-group of contraction in Section 6 and that it has the exponentially decaying property of solutions in Section 7; thus this system satisfies the mentioned two conditions. We conclude with establishing the limiting amplitude principle for the reduced system.

\section{AD system} % EMM system

Here, we introduce the AD system for the EMM. In the absence of an exterior force, the vibration with small deformation of a viscoelastic medium, on $\Omega$, modeled as the AD system is expressed in terms of the elastic displacement, $u = u(x,t)$, and viscous strains, $\phi = \phi(x,t)$, as follows,
\begin{equation}\label{2.1}
\left\{
\begin{array}{ll}
\rho\partial_t^2u-{\rm div} \, \sigma[u,\phi]=0,\quad\sigma[u,\phi]=\sum_{j=1}^n\sigma_j[u,\phi_j],\\
\eta_j\partial_t\phi_j-\sigma_j[u,\phi_j]=0,\quad j=1,\ldots,n,\\[0.25cm]
u=0\quad\text{on}\quad\Gamma_D,\ |\Gamma_D|>0,\\
\sigma[u,\phi] \nu=0\quad\text{on}\quad\Gamma_N,\ |\Gamma_N|>0,\\[0.25cm]
(u,\partial_t u,\phi)|_{t=0}=(u^0,v^0,\phi^0)\quad\text{on}\quad\Omega,
\end{array}
\right.
\end{equation}
requiring compatibility between the initial and boundary values, where $\nu$ is the outward unit normal of $\partial\Omega$, and 
\begin{equation}
\sigma_j[u,\phi_j]:=C_j(e[u]-\phi_j),\quad
e[u]=\tfrac{1}{2}\{\nabla u+(\nabla u)^t\};
\end{equation}
we used the notation, $\phi=(\phi_1,\cdots,\phi_n)$. 

The AD system \eqref{2.1} and its energy dissipation structure were studied in \cite{Yamamoto2019} and in \cite{JTLi2023}, based on a preceding work \cite{KNTY2019} where a simplified AD system without the inertia term and its energy dissipation structure were considered.

The first two equations of \eqref{2.1} are considered on $\Omega \times {\mathbb R}$ unless otherwise specified in the further analysis. The function spaces for the solution and the initial data will be specified later, in Sections 4 and 5, while addressing the unique solvability of \eqref{2.1}; this will follow from the existence of a $C_0$-(semi)group for \eqref{2.1}.

We rewrite the AD system by introducing the following notation. We let \newline $\breve{C} := {\rm block \, diag} (C_1, \cdots, C_n)$, $\breve{\eta} := {\rm block \, diag} (\eta_1 I_d, \cdots, \eta_n I_d)$ where $I_d$ stands for the $d \times d$ identity matrix, $\breve{I} :=I_{nd}$, and $O$ denote the zero matrix. Then we have
\begin{equation}\label{2.2}
\sigma[u,\phi]
%:&=\sum_{j=1}^n\sigma_j[u,\phi_j]=\sum_{j=1}^nC_j(e[u]-\phi_j)
=\mathfrak{T}\{\breve{C}(e[u]\breve{I}-\phi)\},
\end{equation}
where $\mathfrak{T}$ denotes the trace for the diagonal blocks, $\breve{C}\phi := {\rm block \, diag} (C_1\phi_1, \cdots, C_n\phi_n)$, $C_j\phi_i := (C_j\phi_i)_{kl}$, $(C_j\phi_i)_{kl} = \sum_{r,s=1}^n(C_j)_{klrs}(\phi_i)_{rs}$, and \\ $e[u] \breve{I} := {\rm block \, diag} (e[u], \cdots, e[u])$, while
$\breve{C}(e[u]\breve{I}) := {\rm block \, diag} (C_1e[u], \cdots, C_ne[u])$.

\begin{remark}\label{ESLS1} We interpret the system for the ESLSM as a special case of the system for the EMM in the sense that $\eta_j^{-1} C_j = 0$ for some of the values of $j$, and omitting the $\phi_j$ for such values.
\end{remark}

\medskip
In order to write the equations in \eqref{2.1} as a first order system with respect to the time derivative, we let $u_1=u$, $u_2=\partial_t u$ and $U:=(u_1,u_2,\phi)^t$. Then, using \eqref{2.1} and \eqref{2.2}, we get
\begin{equation}\label{2.3}
\left\{
\begin{aligned}
\partial_tu_1&=u_2,\\
\partial_tu_2&=\rho^{-1}{\rm div} \, \sigma[u,\phi]=\rho^{-1}{\rm div}\,\mathfrak{T}\{\breve{C}(e[u]\breve{I}-\phi)\},\\
\partial_t\phi&=\breve{\eta}^{-1}\{\breve{C}(e[u]\breve{I}-\phi)\},
\end{aligned}
\right.
\end{equation}
which takes the form, $\partial_t U=AU$, with
\begin{equation}\label{2.4}
A=
\begin{pmatrix}
O & I_d & O \\
\rho^{-1}{\rm div}\,\mathfrak{T}\{\breve{C}(e[\cdot]\breve{I})\}& O & -\rho^{-1}{\rm div}\,\mathfrak{T}\{\breve{C}\cdot\}\\
\breve{\eta}^{-1}\{\breve{C}(e[\cdot]\breve{I})\}& O & -\breve{\eta}^{-1}\{\breve{C}\cdot\}
\end{pmatrix}.
\end{equation}
%Here, note that we are allowing to have some blocks $\eta_j^{-1} C_j$'s of $\breve{\eta}^{-1}\breve{C}$ can be zero.
For $\lambda>0$, we consider the ``$\lambda$-equation'' given as $(\lambda I-A)U=F:=(f_1,f_2,\omega)^t$. Writing out\footnote{In the next section, we introduce the notation $\hat{u}_1$, $\hat{u}_2$ and $\hat{\phi}_j$. We omit the $\hat{.}$, here, following standard notational convention for $\lambda$-equations.} the components, we obtain
%\begin{equation}\label{2.5}
%AU=
%\begin{pmatrix}
%u_2 \\
%\rho^{-1}{\rm div}\,\mathfrak{T}\{\breve{C}(e[u_1]\breve{I})\}-%\rho^{-1}{\rm div}\,\mathfrak{T}\{\breve{C}\phi\}\\
%\breve{\eta}^{-1}\{\breve{C}(e[u_1]\breve{I})\} -%\breve{\eta}^{-1}\{\breve{C}\phi\}
%\end{pmatrix},
%\end{equation}
\begin{equation}\label{2.6}
\left\{
\begin{aligned}
\lambda u_1&=u_2+f_1,\\
\lambda u_2&=\rho^{-1}{\rm div}\,\mathfrak{T}\{\breve{C}(e[u_1]\breve{I} - \phi)\} + f_2,\\
\lambda \phi&=\breve{\eta}^{-1}\{\breve{C}(e[u_1]\breve{I} - \phi)\} + \omega.
\end{aligned}
\right.
\end{equation}
In the above, we have omitted writing the boundary condition for conciseness. This convention will be used from now on for any $\lambda-$equation. We note that $\mathfrak{T}\{\breve{C}(e[u]\breve{I}-\phi)\}=\sigma[u,\phi]$ in  \eqref{2.2}
is the most important term which we have to analyze because it is directly connected to the Neumann boundary condition $\mathfrak{T}\{\breve{C}(e[u_1]\breve{I}-\phi)\}\nu=0$ on $\Gamma_N$ given in \eqref{2.1}. Next, we rewrite this Neumann boundary condition in terms of the equation for $u_1$.

Using the third equation of \eqref{2.6}, we find that
%\begin{equation}\label{2.7}
%\begin{aligned}
%(\lambda I+\breve{\eta}^{-1}\breve{C})\phi=\breve{\eta}^{-1}\%{\breve{C}(e[u_1]\breve{I})\}+\omega.
%\end{aligned}
%\end{equation}
%By using \eqref{2.7}, we have
\begin{equation}\label{2.8}
\phi=(\lambda I+\breve{\eta}^{-1}\breve{C})^{-1}[\breve{\eta}^{-1}\{\breve{C}(e[u_1]\breve{I})\}+\omega].
\end{equation}
Combining \eqref{2.6} and \eqref{2.8}, we obtain
\begin{equation}\label{2.9}
\begin{aligned}
\lambda^2 u_1 =& \lambda u_2+\lambda f_1\\
=&\rho^{-1}{\rm div}\,\mathfrak{T}\{\breve{C}(e[u_1]\breve{I})\}-\rho^{-1}{\rm div}\,\mathfrak{T}\{\breve{C}\phi\}+f_2+\lambda f_1\\
=&\rho^{-1}{\rm div}\,\mathfrak{T}\{\breve{C}(e[u_1]\breve{I})\}+f_2+\lambda f_1\\
&-\rho^{-1}{\rm div}\,\mathfrak{T}\{\breve{C}(\lambda I+\breve{\eta}^{-1}\breve{C})^{-1}[\breve{\eta}^{-1}\{\breve{C}(e[u_1]\breve{I})\}+\omega]\}.
\end{aligned}
\end{equation}
We observe that
\begin{equation}\label{2.10}
\begin{aligned}
\breve{C}(e[u_1]\breve{I})&-\breve{C}(\lambda I+\breve{\eta}^{-1}\,\breve{C})^{-1}\,\breve{\eta}^{-1}\,\breve{C}(e[u_1]\breve{I})\\
=&\breve{C}(e[u_1]\breve{I})-\breve{\eta}^{-1}\,\breve{C}(\lambda I+\breve{\eta}^{-1}\,\breve{C})^{-1}\,\breve{C}(e[u_1]\breve{I})\\
=&[I-\breve{\eta}^{-1}\,\breve{C}(\lambda I+\breve{\eta}^{-1}\,\breve{C})^{-1}]\,\breve{C}\,(e[u_1]\breve{I})\\
=&\lambda(\lambda I+\breve{\eta}^{-1}\,\breve{C})^{-1}\,\breve{C}\,(e[u_1]\breve{I}).
\end{aligned}
\end{equation}
Combining \eqref{2.9} and \eqref{2.10}, we have
\begin{equation}\label{2.11}
\begin{aligned}
\lambda^2 u_1 =&\lambda \rho^{-1}{\rm div}\,\mathfrak{T}\{ (\lambda I+\breve{\eta}^{-1}\,\breve{C})^{-1}\,\breve{C}\,(e[u_1]\breve{I})\}\\
&-\rho^{-1}{\rm div}\,\mathfrak{T}\{\breve{C}(\lambda I+\breve{\eta}^{-1}\breve{C})^{-1}\omega\}+f_2+\lambda f_1\\
=&\lambda \rho^{-1}{\rm div}\,\mathfrak{T}\{ (\lambda I+\breve{\eta}^{-1}\,\breve{C})^{-1}\,\breve{C}\,(e[u_1]\breve{I})\}\\
&-\rho^{-1}{\rm div}\,\mathfrak{T}\{(\lambda I+\breve{\eta}^{-1}\breve{C})^{-1}\breve{C}\omega\}+f_2+\lambda f_1.
\end{aligned}
\end{equation}
The aforementioned Neumann boundary condition on $\Gamma_N$ applies with $(f_1,f_2,\omega)=0$. Thus, with \eqref{2.10},
\begin{equation}\label{2.12}
\begin{aligned}
\mathfrak{T}\{\breve{C}(e[u_1]\breve{I}-\phi)\}=&\lambda\mathfrak{T}\{ (\lambda I+\breve{\eta}^{-1}\,\breve{C})^{-1}\,\breve{C}\,(e[u_1]\breve{I})\}\\
=& \lambda\sum_{j=1}^n((\lambda I+\eta_j^{-1}\,C_j)^{-1}\,C_j)\,e[u_1]
\end{aligned}
\end{equation}
and the Neumann boundary condition becomes 
\begin{equation}\label{eq:NBDlambda}
\left\{\sum_{j=1}^n((\lambda I+\eta_j^{-1}\,C_j)^{-1}\,C_j)\,e[u_1]\right\}   
\nu=0\,\,\text{on $\Gamma_N$}.
\end{equation}

\section{ID system}

Here, we discuss the ID system associated with the EMM. Essentially, this system follows directly from (\ref{2.1}) upon setting $\phi^0 = 0$. With this initial condition, one can integrate
\begin{equation}\label{3.1}
\left\{
\begin{array}{ll}
\eta_j\partial_t\phi_j=\sigma_j[u,\phi_j]=C_j(e[u]-\phi_j),\\
\phi_j(0)=0,
\end{array}
\right.
\end{equation}
to yield
\begin{equation}\label{3.2}
\phi_j(t)=\int_0^t\, e^{-(t-s)\eta_j^{-1}\,C_j}\, \eta_j^{-1}\,C_j\, e[u(s)] ds,
\end{equation}
where the $x$-dependence of the different functions and tensors is suppressed. With \eqref{2.2}, we directly obtain
\begin{equation}\label{3.3}
\sigma[u]=\sum_{j=1}^n C_j \left\{
e[u] - \int_0^t\, e^{-(t-s)\eta_j^{-1}\,C_j}\, \eta_j^{-1}\,C_j\, e[u(s)] ds \right\},
\end{equation}
with $\phi$ being eliminated and signifying a description in terms of relaxation. The ID system for the EMM is then given by
\begin{equation}\label{VID sys}
\left\{
\begin{array}{ll}
\rho\,\partial_t^2 u - {\rm div} \, \sigma[u]=0,\\[0.25cm]
u=0\quad\text{on}\quad\Gamma_D,\\
\sigma[u]\nu=0\quad\text{on}\quad\Gamma_N,\\[0.25cm]
(u,\partial_t u)|_{t=0}=(u^0,v^0)\quad\text{on}\quad\Omega.
\end{array}
\right.
\end{equation}
As in Remark\ref{ESLS1}, we include the ESLSM by setting $\eta_j C_j = 0$ for some of the values of $j$.

Again, this system is equivalent to \eqref{2.1} subject to the restriction, $\phi^0 = 0$. This initial condition avoids the occurrence of stationary solutions. The mentioned reduced system, which will be discussed later, is a (closed) system given in terms of other dependent variables; the reduced system does not have stationary solutions either. However, it generates such solutions upon transforming the variables back to the original ones. We remark that this restriction has profound consequences such as preventing the generation of a semigroup. The well-posedness of this system in appropriate function spaces and the exponential decay of solutions in these function spaces are presented in a companion paper \cite{IDUD_2023}. In the above, the exponential is evaluated through an expansion and contractions of rank $4$ tensor $C_j$ with itself. 

Ahead of the further analysis of properties of solutions, we note that obtaining exponential decay is a nontrivial matter. In case that EMM includes a ESLS, neither the $\phi_j$'s nor the solution $u$ of the ID system exhibit this decay. In the case of a  pure EMM, the exponential decay of each $\phi_j$ depends on the relation between the lower bound of positive symmetric matrix $\eta_j^{-1} C_j$ and the exponential decay rate of solutions of the ID system. To be clear, one can view $\eta_j^{-1}C_j,\,1\le j\le n$ as a symmetric matrix employing the Voigt notation.

The Laplace transform of $\phi_j$ is given by
\begin{equation}\label{3.6}
\hat{\phi}_j(\lambda)=(\lambda I+\eta_j^{-1}\,C_j)^{-1}\, \eta_j^{-1}\,C_j\, e[\hat{u}(\lambda)],
\end{equation}
where $\hat{u}(\lambda)$ is the Laplace transform of $u(t)$. With this Laplace transform, we recover the Neumann boundary condition in \eqref{eq:NBDlambda}.
%\begin{lemma}\label{lem3.1}
%Let $D$ be a non-negative symmetric matrix. Then we have for $\lambda>0$ that
%\begin{equation}\label{3.4}
%\begin{aligned}
%\int_0^\infty\, e^{-\lambda t}\,  e^{- tD} dt=(\lambda I+D)^{-1}.
%\end{aligned}
%\end{equation}
%\end{lemma}
%\pf. Let $T$ be an orthogonal matrix and $\Lambda$ be a diagonal matrix such that
%$$ T^{-1}\, D\, T=\Lambda. $$
%Since
%$$
%e^{-\lambda t}\,  e^{- tD}= e^{- t(\lambda I+D)}=e^{- t\{T\,(\lambda I+\Lambda)\,T^{-1}\}}=T\,e^{- t(\lambda I+\Lambda)}\,T^{-1},
%$$
%we have
%\begin{equation}\label{3.5}
%\begin{aligned}
%\int_0^\infty\, e^{-\lambda t}\,  e^{- tD} dt&=T\,(\int_0^\infty e^{- t(\lambda I+\Lambda)} dt)\,T^{-1}=T\, (\lambda I+\Lambda)^{-1}\,T^{-1}\\
%&=\{T\, (\lambda I+\Lambda)\,T^{-1}\}^{-1}=(\lambda I+D)^{-1}.
%\end{aligned}
%\end{equation}
%\eproof
Indeed, combining \eqref{3.6} and \eqref{2.12} and noting that $\hat{u}_1 = \hat{u}$, we have
\begin{equation}\label{3.7}
\begin{aligned}
\widehat{\sigma[u,\phi]}(\lambda)&=\sum_{j=1}^nC_j(e[\hat{u}(\lambda)]-\hat{\phi}_j(\lambda))
=\mathfrak{T}\{\breve{C}(e[\hat{u}(\lambda)]\breve{I}-\hat{\phi}(\lambda))\}
\\
&=\sum_{j=1}^nC_je[\hat{u}(\lambda)]-\sum_{j=1}^n(\lambda I+\eta_j^{-1}\,C_j)^{-1}\, \eta_j^{-1}\,C_j^2\, e[\hat{u}(\lambda)]\\
&=\sum_{j=1}^n\{I-(\lambda I+\eta_j^{-1}\,C_j)^{-1}\, \eta_j^{-1}\,C_j\}C_je[\hat{u}(\lambda)]\\
&=\lambda\sum_{j=1}^n (\lambda I+\eta_j^{-1}\,C_j)^{-1}\, C_je[\hat{u}(\lambda)].
\end{aligned}
\end{equation}
Therefore, the Neumann boundary condition of the $\lambda$-equation for the ID system coincides with the one of the $\lambda$-equation for the AD system.

\section{Resolvent estimate}

In this section, we give a resolvent estimate for the AD system. First, using \eqref{2.6}, \eqref{2.8} and \eqref{2.11}, we rewrite the $\lambda-$equation \eqref{2.6} in an alternative form,
\begin{equation}\label{4.1}
\left\{
\begin{array}{rl}
\lambda^2 u_1&=\lambda \rho^{-1}{\rm div}\,\mathfrak{T}\{ (\lambda I+\breve{\eta}^{-1}\,\breve{C})^{-1}\,\breve{C}\,(e[u_1]\breve{I})\}\\
&\quad-\rho^{-1}{\rm div} \, \mathfrak{T}\{(\lambda I+\breve{\eta}^{-1}\breve{C})^{-1}\breve{C}\omega\}+f_2+\lambda f_1,\\
u_2&=\lambda u_1-f_1,\\
\phi&=(\lambda I+\breve{\eta}^{-1}\breve{C})^{-1}[\breve{\eta}^{-1}\{\breve{C}(e[u_1]\breve{I})\}+\omega].
\end{array}
\right.
\end{equation}
In the later analysis pertaining to $A$, we need to consider the resolvent equation for the $(u_1,u_2)$ components separately. In order to do that, we need to ensure sufficient regularity coming from the equation for $\phi$. More precisely, the term $-\rho^{-1}{\rm div} \, \mathfrak{T}\{(\lambda I + \breve{\eta}^{-1}\breve{C})^{-1}\breve{C}\omega\}$ in the first equation of \eqref{4.1} has to increase the regularity such that the outcome is in $L^2_\rho(\Omega)$. Hence, we have to smooth $\omega \in L^2(\Omega)$, most straightforwardedly with the isomorphism
$$ \zeta := (-\Delta)^{-1/2} :\ L^2(\Omega)\xrightarrow{\sim} H_0^1(\Omega),$$
where $\Delta$ is the Laplace operator on $\Omega$ supplemented with the Dirichlet boundary condition. We also define
\begin{equation}\label{4.2}
\mathfrak{Z}:=
\begin{pmatrix}
I_d                &        O      &        O\\
O                &        I_d      &        O\\
O                &        O      &        \mathfrak{z}
\end{pmatrix},
\end{equation}
with $\mathfrak{z}=\zeta \breve{I}$. Replacing $F=(f_1,f_2,\omega)^t\in L^2(\Omega)$ by $\mathfrak{Z}\,F=(f_1,f_2,\mathfrak{z} \omega)^t$, and then
composing $\mathfrak{Z}^{-1}$ with the $\lambda-$equation, $(\lambda I-A)U=\mathfrak{Z}F$, we obtain
\begin{equation}\label{4.3}
\begin{aligned}
\mathfrak{Z}^{-1}(\lambda I-A)U&=\lambda(\mathfrak{Z}^{-1} U)-\mathfrak{Z}^{-1} AU\\
&=\lambda V-\mathfrak{Z}^{-1} A \mathfrak{Z} V\\
&=(\lambda I- A_{Z})V=F,
\end{aligned}
\end{equation}
where $U=\mathfrak{Z} V$ and
\begin{equation}\label{4.4}
A_Z:=\mathfrak{Z}^{-1} A\mathfrak{Z}=
\begin{pmatrix}
O & I_d & O\\
\rho^{-1}{\rm div}\,[\mathfrak{T}\{\breve{C}(e[\cdot]\breve{I})\}]& O & -\rho^{-1}{\rm div}\,[\mathfrak{T}\{(\breve{C}\mathfrak{z})\cdot\}]\\
\mathfrak{z}^{-1}\breve{\eta}^{-1}\{\breve{C}(e[\cdot]\breve{I})\}& O & -\mathfrak{z}^{-1}\breve{\eta}^{-1}[(\breve{C}\mathfrak{z})\cdot]
\end{pmatrix}.
\end{equation}
We denote $V =(v_1,v_2,\phi_z)^t$, and let $F=(f_1,f_2,\omega)^t\in\mathcal{L}^2(\Omega) := L_\rho^2(\Omega)\times L^2(\Omega)\times L^2(\Omega)$ with $L_\rho^2(\Omega) := L^2(\Omega; \rho dx)$. Then $(\lambda I- A_{Z})V=F$ takes the componentwise form,
\begin{equation}\label{4.5}
\left\{
\begin{array}{ll}
\lambda v_1-v_2=f_1,\\
\lambda v_2-\rho^{-1}{\rm div}\,[\mathfrak{T}\{\breve{C}(e[v_1]\breve{I})\}]+\rho^{-1}{\rm div}\,[\mathfrak{T}\{(\breve{C}\mathfrak{z})\phi_z\}]=f_2\\
\lambda \phi_z-\mathfrak{z}^{-1}\breve{\eta}^{-1}\{\breve{C}(e[v_1]\breve{I})\}+\mathfrak{z}^{-1}\breve{\eta}^{-1}[(\breve{C}\mathfrak{z})\phi_z]=\omega.
\end{array}
\right.
\end{equation}
Substituting $(v_1,v_2,\phi_z) = (u_1,u_2,\mathfrak{z}^{-1}\phi)$, a computation yields
\begin{equation}\label{4.6}
\left\{
\begin{aligned}
\lambda u_1=& u_2+f_1,\\
\lambda u_2 =& \lambda\rho^{-1}{\rm div}\,\mathfrak{T}\{ (\lambda I+\breve{\eta}^{-1}\,\breve{C})^{-1}\,\breve{C}\,(e[u_1]\breve{I})\}\\
&-\rho^{-1}{\rm div}\,\mathfrak{T}\{(\lambda I+\breve{\eta}^{-1}\breve{C})^{-1}\breve{C}(\mathfrak{z}\omega)\}+f_2,\\
\lambda \phi=& \lambda(\lambda I+\breve{\eta}^{-1}\breve{C})^{-1}[\breve{\eta}^{-1}\{\breve{C}(e[u_1]\breve{I})\}+\mathfrak{z}\omega],
\end{aligned}
\right.
\end{equation}
which can be written in the matrix form
\begin{equation}\label{eq:tAeq}
(\lambda I-\tilde{A})U=
(f_1,f_2-\rho^{-1}{\rm div}\,\mathfrak{T}\{(\lambda I+\breve{\eta}^{-1}\breve{C})^{-1}\breve{C}(\mathfrak{z}\omega)\},\lambda(\lambda I+\breve{\eta}^{-1}\breve{C})^{-1}(\mathfrak{z}\omega))^t,
\end{equation}
with
\begin{equation}\label{A circle}
\tilde{A}:=
\begin{pmatrix}
O  & I_d  & O\\
\lambda\rho^{-1}{\rm div}\,\mathfrak{T}\{(\lambda I+\breve{\eta}^{-1}\breve{C})^{-1}(\breve{C}e[\cdot]\breve{I})\}& O & O\\
\lambda(\lambda I+\breve{\eta}^{-1}\breve{C})^{-1}\breve{\eta}^{-1}\{\breve{C}(e[\cdot]\breve{I})\}& O & O
\end{pmatrix}.
\end{equation}
We find the equivalence of $\lambda$-equations
\[
\text{\eqref{4.3}}\ \Leftrightarrow\
\text{\eqref{eq:tAeq}}.
\]
We note that the right-most column of $\tilde{A}$ in the blockwise sense is zero. We will later appreciate that this form of $\tilde{A}$ implies that $0$ is not in the resolvent set of $A_Z$.

The remainder of this section is devoted to proving that $A_Z$ generates a $C_0$-semigroup. This is accomplished by proving the following estimate upon analyzing the relevant $\lambda$-equation. 

\begin{pr} There exists a constant $\beta>0$ such that 
\begin{equation}\label{Hille-Yosida type resolvent estimate}
\Vert(\lambda I-A_Z)^{-1}\Vert\le (|\lambda|-\beta)^{-1},\quad |\lambda|>\beta,    
\end{equation}
where $\Vert\,\cdot\Vert$ is the operator norm on the space $W_Z$ introduced in \eqref{Wz} below.
\end{pr}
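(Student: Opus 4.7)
The plan is to reduce the resolvent equation $(\lambda I - A_Z)V = F$ to a single elliptic problem for the displacement-like component $u_1$, apply Lax--Milgram to obtain existence with an $H^1$-bound on $u_1$, and reconstruct the remaining components algebraically. In the substituted variables $(u_1,u_2,\phi) = (v_1,v_2,\mathfrak{z}\phi_z)$, the equivalent system \eqref{4.6} allows one to eliminate $u_2 = \lambda u_1 - f_1$ from the first line and $\phi = (\lambda I + \breve{\eta}^{-1}\breve{C})^{-1}[\breve{\eta}^{-1}\breve{C}(e[u_1]\breve{I}) + \mathfrak{z}\omega]$ from the third, leaving a second-order equation for $u_1 \in H^1_{\Gamma_D}(\Omega)^d := \{v \in H^1(\Omega)^d : v|_{\Gamma_D}=0\}$ subject to the Neumann condition \eqref{eq:NBDlambda} on $\Gamma_N$. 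Testing against $\bar w \in H^1_{\Gamma_D}$ and integrating by parts (the boundary term vanishing exactly by \eqref{eq:NBDlambda}) yields the sesquilinear form
\begin{equation*}
a_\lambda(u_1,w) = \lambda^2(u_1,w)_{L^2_\rho} + \lambda\sum_{j=1}^n \int_\Omega \bigl((\lambda I + \eta_j^{-1}C_j)^{-1}C_j\,e[u_1]\bigr):e[w]\,dx.
\end{equation*}

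The central task is to establish coercivity of $a_\lambda$ on $H^1_{\Gamma_D}$ for $|\lambda| > \beta$. The identity already used in \eqref{2.10}, namely $\lambda(\lambda I + \eta_j^{-1}C_j)^{-1} = I - (\lambda I + \eta_j^{-1}C_j)^{-1}\eta_j^{-1}C_j$, shows that the viscoelastic term equals $\sum_j \int C_j e[u_1]:e[w]\,dx$ plus an $O(|\lambda|^{-1})$ perturbation. Testing with $\overline{u_1}$ and with $\overline{(\bar\lambda/|\lambda|)u_1}$ and taking a convex combination tuned to $\arg\lambda$ --- so that the mass term contributes $|\lambda|^2\|u_1\|_{L^2_\rho}^2$ to the real part and the elastic term contributes $\|e[u_1]\|_{L^2}^2$ via \eqref{1.4} and Korn's inequality --- should yield the coercivity bound
\begin{equation*}
{\rm Re}\bigl[M(\lambda)\,a_\lambda(u_1,u_1)\bigr] \geq c\bigl(|\lambda|^2\|u_1\|_{L^2_\rho}^2 + \|e[u_1]\|_{L^2}^2\bigr)
\end{equation*}
for a unimodular $M(\lambda)$ and all $|\lambda| > \beta$, with $\beta$ chosen large enough to absorb the perturbative $O(|\lambda|^{-1})$ terms. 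Lax--Milgram then produces a unique $u_1$ satisfying $|\lambda|\|u_1\|_{L^2_\rho} + \|e[u_1]\|_{L^2} \lesssim |\lambda|^{-1}\|F\|_{\mathcal{L}^2}$; here the smoothing $\mathfrak{z} = (-\Delta)^{-1/2}\breve{I}$ applied to $\omega$ is exactly what ensures $\rho^{-1}{\rm div}\,\mathfrak{T}\{(\lambda I + \breve{\eta}^{-1}\breve{C})^{-1}\breve{C}(\mathfrak{z}\omega)\} \in L^2$ with norm $\lesssim \|\omega\|_{L^2}$. Reading off $u_2 = \lambda u_1 - f_1$ and recovering $\phi_z = \mathfrak{z}^{-1}\phi$ from the third line of \eqref{4.6}, and assembling the three component bounds in the $W_Z$-norm, delivers \eqref{Hille-Yosida type resolvent estimate}.

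The main obstacle is the coercivity estimate for complex $\lambda$. For real $\lambda > \beta$, testing with $\bar u_1$ makes the argument almost immediate because ${\rm Re}(\lambda^2) = \lambda^2 > 0$. For $\lambda$ with $|{\rm Re}\,\lambda|$ small compared to $|{\rm Im}\,\lambda|$, the mass term $\lambda^2$ has essentially negative real part and would by itself destroy coercivity against $\bar u_1$; one must extract sufficient positivity from $(\lambda I + \eta_j^{-1}C_j)^{-1}$, whose real part has a sign determined by ${\rm sgn}({\rm Re}\,\lambda)$ through the symmetric positive-definite structure of $\eta_j^{-1}C_j$ (in Voigt form). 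Obtaining a single coercivity constant, and hence the symmetric bound $(|\lambda| - \beta)^{-1}$ rather than $({\rm Re}\,\lambda - \beta)^{-1}$, uniformly on $\{|\lambda| > \beta\}$ will require a sector-decomposition or spectral argument in the $\lambda$-plane; this symmetry in $\lambda$ is precisely what underlies the fact, announced in the introduction, that $A_Z$ generates a $C_0$-group rather than merely a $C_0$-semigroup.
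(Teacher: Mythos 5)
There is a genuine gap, and it lies exactly where you placed your bets. You read \eqref{Hille-Yosida type resolvent estimate} as a bound on the exterior of a disk in the complex plane and build your argument around a coercivity estimate for $a_\lambda$ that is uniform over all complex $\lambda$ with $|\lambda|>\beta$; this is both unnecessary and unattainable. The estimate is needed (and proved in the paper) only for \emph{real} $\lambda$ with $|\lambda|>\beta$, treated separately for $\lambda>0$ and $\lambda<0$, which is precisely what the group-generation criterion of Theorem \ref{cor5.2} requires; accordingly Theorem \ref{semigroup Az} only asserts $\rho(A_Z)\supset\{\lambda\in{\mathbb R}:|\lambda|>\beta\}$. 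For complex $\lambda$ the claimed bound is false: the EMM damping is of Maxwell (memory) type, so high-frequency modes are only weakly damped and the spectrum of $A$ (hence of $A_Z=\mathfrak{Z}^{-1}A\mathfrak{Z}$) is unbounded along the imaginary direction while staying within a strip of bounded real part. Taking $\lambda=\varepsilon+i\tau$ with $\tau\to\infty$, the distance from $\lambda$ to the spectrum stays bounded, so $\Vert(\lambda I-A_Z)^{-1}\Vert$ is bounded below by a positive constant, whereas $(|\lambda|-\beta)^{-1}\to0$; indeed a resolvent bound of this form on the whole exterior of a disk would force $A_Z$ to be a bounded operator. This is why your pivotal step --- the coercivity bound with a unimodular multiplier $M(\lambda)$, uniformly for small $|{\rm Re}\,\lambda|$ and large $|{\rm Im}\,\lambda|$ --- is announced with ``should yield'' and deferred to an unspecified ``sector-decomposition or spectral argument'': it cannot be carried out, and the symmetry $\lambda\mapsto-\lambda$ that yields a group lives on the real axis, not on circles $|\lambda|=\text{const}$.

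For real $\lambda$ your Lax--Milgram reduction to a single elliptic problem for $u_1$ is essentially the solvability argument the paper also uses, but there is a second, quantitative gap: the statement demands the bound with constant exactly $1$ (no prefactor), which matters because Theorem \ref{cor5.2} is applied without a constant $M$ (otherwise one must control all powers of the resolvent). Your sketch produces bounds with unspecified constants ($\lesssim$), which only gives $C(|\lambda|-\beta)^{-1}$. The paper obtains the sharp constant by introducing the $\lambda$-dependent inner products $S_\lambda$, $W_\lambda$, in which the wave block $\tilde A_u$ is symmetric (see \eqref{4.15}), so that $\Vert(\lambda I-\tilde A_u)^{-1}\Vert\le|\lambda|^{-1}$ exactly; the lower-order terms coming from $\mathfrak{z}\omega$ and from the comparison between the $W_\lambda$- and $W$-norms are then absorbed into an enlarged shift $\beta=m+k$ through the elementary inequalities \eqref{4.20} and \eqref{4.25}, rather than into a multiplicative constant. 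To repair your proof you would restrict to real $\lambda$ of both signs, and either adopt this $\lambda$-dependent-norm device or otherwise explain how your variational constants can be traded for a shift of $\beta$ so that the operator-norm bound on $W_Z$ has constant $1$.
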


\begin{proof}
%In order to see $A_Z$ generates a  $C_0$-group, we analyze its $\lambda-$equation which also can be analyzed by using whichever the first and the second equation of \eqref{4.7} for $A$ and $\tilde{A}$, respectively. We will use the third equation to analyze the $\lambda-$equation. 
We begin with introducing some notation. Let $W_\lambda:= S_\lambda\times H^1_0(\Omega)$ with $S_\lambda:=K(\Omega)\times L_\rho^2(\Omega)$, where $K(\Omega):=\{u\in H^1(\Omega): u|_{\Gamma_D}=0\}$. $S_\lambda$ is a Hilbert space equipped with the inner product $(\cdot,\cdot)_{S_\lambda}$ dependent on $\lambda$ and given as
\begin{equation}\label{4.8}
\begin{aligned}
(V,V')_{S_\lambda}:=(B_\lambda\nabla v_1, \nabla v_1')+(v_2,v_2')_\rho, \quad V=(v_1,v_2),\quad V'=(v_1',v_2'),
\end{aligned}
\end{equation}
where
\[
   B_\lambda\nabla v_1:=\lambda \mathfrak{T}\{(\lambda I+\breve{\eta}^{-1}\,\breve{C})^{-1}(\breve{C}e[v_1]\breve{I})\},
\]
and $(\cdot,\cdot)_{\rho}$ and $(\cdot,\cdot)$ are the inner products on $L_{\rho}^2(\Omega)$ and $L^2(\Omega)$, respectively.
%$(v_2,v_2')_\rho:= \int_\Omega\rho v_2\overline{v_2'}$ and $(\cdot,\cdot)$ is the $L^2(\Omega)$-inner product.
In the further analysis, we will consider the asymptotic behaviors, $B_\infty$ and $S_\infty$, of $B_\lambda$ and $S_\lambda$, which are defined via $B_\infty = \lim_{\lambda\rightarrow \infty}B_\lambda$ and the induced inner product. We have
\begin{equation}\label{4.9}
   B_\infty\nabla v_1 = \mathfrak{T}\{(\breve{C}e[v_1]\breve{I})\} = C \nabla v_1,\quad
   C = \sum_{j=1}^n C_j.
\end{equation}
We let $W_{\infty} = S_\infty \times H_0^1(\Omega)$ but will also write $W$ for $W_{\infty}$.

We consider the upperleft submatrix, $\tilde{A}_u$, of $\tilde{A}$, which has the form
\begin{equation}\label{4.10}
\tilde{A}_u:=
\begin{pmatrix}
O & I_d \\
\lambda\rho^{-1}{\rm div}\,\mathfrak{T}\{(\lambda I+\breve{\eta}^{-1}\breve{C})^{-1}(\breve{C}e[\cdot]\breve{I})\}& O
\end{pmatrix}
= \begin{pmatrix}
O & I_d \\
\rho^{-1} {\rm div}(B_\lambda\nabla[\cdot]) & O
\end{pmatrix}.
\end{equation}
We consider the equation $\tilde{A}_u Y_u = G$ with $G = (\hat{f_1},\hat{f_2})^t$ and $Y_u = (y_1,y_2)^t$, which is equivalent to $y_2 = \hat{f_1}$, $\rho^{-1} {\rm div}(B_\lambda\nabla y_1) = \hat{f_2}$. The second equation is related to the Neumann boundary condition on $\Gamma_N$. In terms of the continuous sesquilinear form
\begin{equation}\label{sesquilinear form a'}
\mathfrak{a}'(y_1,z):=\int_\Omega(B_\lambda\nabla y_1)\overline{\nabla z},\,\,z\in K(\Omega),
\end{equation}
we consider the linear map $K(\Omega)\ni y_1\rightarrow \rho^{-1}{\rm div}(B_\lambda\nabla y_1)$
as the bounded linear operator $\mathfrak{A}': K(\Omega)\rightarrow K(\Omega)'$ defined by
$$
\mathfrak{a}'(y_1,z)=(\mathfrak{A}'y_1,z)_\rho,\,\,z\in K(\Omega),
$$
where $K(\Omega)'$ is the dual space of $K(\Omega)$ with respect to the inner product $(\cdot\,,\cdot)_\rho$.
Furthermore, we let $\mathfrak{A} :\ S_\lambda \to S_\lambda'$ be the operator defined by replacing the left lower block of the operator matrix in \eqref{4.10} by $\mathfrak{A}'$, that is,
\[
\mathfrak{A} = \begin{pmatrix}
    O & I_d \\ \mathfrak{A}' & O
\end{pmatrix},
\]
where $S_\lambda'$ is the dual space of $S_\lambda$ with respect to the $L^2(\Omega) \times L^2_\rho(\Omega)$ inner product. Then we identify $\tilde{A}_u$ with $\mathfrak{A}|_{D(\tilde{A}_u)}$,
with
\[
 D(\tilde{A}_u):=\{Y_u=(y_1,y_2)^{\it t}\in S_\lambda : y_2\in H^1(\Omega),\,\mathfrak{A}{Y_u}\in L^2(\Omega) \times L^2_\rho(\Omega)\}.
\]
By the coercivity of the sesquilinear form \eqref{sesquilinear form a'} due to Korn's inequality \cite{Devaut}, we have $\overline{D(\tilde{A}_u)} = S_\lambda$ \cite[Chapter 3, Section 2]{S}.

An elementary calculation leads to
\begin{multline}\label{4.15}
(\tilde{A}_u{Y_u},{Y_u})_{S_\lambda}
=(B_\lambda\nabla y_2, \nabla y_1)
+({\rm div}\{B_\lambda\nabla y_1\},y_2)\\
=-(y_2, {\rm div}\{B_\lambda\nabla y_1\})+({\rm div}\{B_\lambda\nabla y_1\},y_2)\\
= (B_\lambda\nabla y_1, \nabla y_2)+(y_2,{\rm div}\{B_\lambda\nabla y_1\})
= ({Y_u},\tilde{A}_u{Y_u})_{S_\lambda}
\end{multline}
for ${Y_u}\in  D(\tilde{A}_u)$. Using this equality, we obtain for $\lambda>0$ and ${Y_u} \in  D(\tilde{A}_u)$ the estimate,
\begin{equation}\label{4.16}
\begin{aligned}
\|(\lambda I-\tilde{A}_u){Y_u}\|_{S_\lambda}^2&=((\lambda I-\tilde{A}_u){Y_u},(\lambda I-\tilde{A}_u){Y_u})_{S_\lambda}\\
%&=\lambda^2\|{Y_u}\|_{S_\lambda}^2+\|\tilde{A}_u{Y_u}\|_{S_\lambda}^2-\lambda\{\tilde{A}_u{Y_u},{Y_u})_{S_\lambda}+({Y_u},\tilde{A}_u{Y_u})_{S_\lambda}\}\\
&=\lambda^2\|{Y_u}\|_{S_\lambda}^2+\|\tilde{A}_u{Y_u}\|_{S_\lambda}^2\\
&\geq \lambda^2\|{Y_u}\|_{S_\lambda}^2.
\end{aligned}
\end{equation}

\medskip

\noindent
\textbf{The case $\lambda>0$}. For any $\lambda>0$, the bijectivity of the map
$$ \lambda I-\tilde{A}_u : D(\tilde{A}_u) \rightarrow S_\lambda$$
can be shown using the unique solvability of the aforementioned variational problem by recalling the Korn inequality. Hence, in terms of the operator norm, \eqref{4.16} implies
\begin{equation}\label{4.17}
\\|(\lambda I-\tilde{A}_u)^{-1}\|
%:=\|(\lambda I-\tilde{A}_u)^{-1}\|_{S_\lambda\rightarrow S_\lambda}
\leq \lambda^{-1}, \quad \lambda>0.
\end{equation}

Now, we lift $\tilde{A}_u$ to $W_\lambda:=S_\lambda\times H^1_0(\Omega)$ to define an operator which is nothing but $\tilde{A}$, which clearly has $\overline{D(\tilde{A})}=W_\lambda$, where $D(\tilde{A})$ is the domain of $\tilde A$ simply given as
$$ 
D(\tilde A):=D(\tilde{A}_u)\times H_0^1(\Omega).
$$
For any $\lambda>0$,
$$ \lambda I-{\tilde{A}} : D(\tilde A) \to W_\lambda $$
is bijective.

Combining the relevant components of \eqref{eq:tAeq} and \eqref{4.17}, we find that
\begin{equation}\label{4.18}
\begin{aligned}
\|(u_1,u_2)\|_{S_\lambda}&\leq \lambda^{-1}\|(f_1,f_2-\rho^{-1}{\rm div}\,\mathfrak{T}\{(\lambda I+\breve{\eta}^{-1}\breve{C})^{-1}\breve{C}(\mathfrak{z}\omega)\})\|_{S_\lambda}\\
&\leq \lambda^{-1}\|(f_1,f_2)\|_{S_\lambda}+m_1\lambda^{-2}\|\mathfrak{z}\omega\|_{H^1_0(\Omega)}
\end{aligned}
\end{equation}
for some positive constant, $m_1$. With the third equation of \eqref{4.6} and \eqref{4.18}, we get
\begin{equation}\label{4.19}
\begin{aligned}
\|\phi\|_{H^1_0(\Omega)} &=\|(\lambda I+\breve{\eta}^{-1}\breve{C})^{-1}[\breve{\eta}^{-1}\{\breve{C}(e[u_1]\breve{I})\}+\mathfrak{z}\omega]\|_{H^1_0(\Omega)}\\
&\leq m_2\lambda^{-1}\|(u_1,u_2)\|_{S_\lambda}+\lambda^{-1}(1+m_3\lambda^{-1})\|\mathfrak{z}\omega\|_{H^1_0(\Omega)}\\
&\leq m_4 \lambda^{-2}\|(f_1,f_2)\|_{S_\lambda}+\lambda^{-1}(1+m_5\lambda^{-1})\|\mathfrak{z}\omega\|_{H^1_0(\Omega)},
\end{aligned}
\end{equation}
where $m_2, m_3, m_4, m_5$ are positive constants chosen appropriately.

We introduce $m$ according to
\begin{equation}\label{def m}
 m:=m_1+m_4+m_5.   
\end{equation}
For $\lambda > m$, we have the elementary estimate
\begin{equation}\label{4.20}
\begin{aligned}
\lambda^{-1}+m\lambda^{-2}&= (\lambda-m)^{-1}(\lambda-m)\lambda^{-1}(1+m\lambda^{-1})\\
&=(\lambda-m)^{-1}(1-m\lambda^{-1})(1+m\lambda^{-1})\\
&=(\lambda-m)^{-1}(1-m^2\lambda^{-2})\\
&\leq (\lambda-m)^{-1}.
\end{aligned}
\end{equation}
Combining \eqref{4.18}, \eqref{4.19} and \eqref{4.20}, we then obtain
\begin{equation}\label{4.21}
\begin{aligned}
\|U\|_{W_\lambda}&\leq \lambda^{-1}(1+m_4\lambda^{-1})\|(f_1,f_2)\|_{S_\lambda}+\lambda^{-1}(1+m_1\lambda^{-1}+m_5\lambda^{-1})\|\mathfrak{z}\omega\|_{H^1_0(\Omega)}\\
&\leq (\lambda^{-1}+m\lambda^{-2})\|(f_1,f_2)\|_{S_\lambda}+(\lambda^{-1}+m\lambda^{-2})\|\mathfrak{z}\omega\|_{H^1_0(\Omega)}\\
&\leq (\lambda^{-1}+m\lambda^{-2})\|\mathfrak{Z}F\|_{W_\lambda}\\
&\leq (\lambda-m)^{-1}\|\mathfrak{Z}F\|_{W_\lambda}.
\end{aligned}
\end{equation}
However, the norm of $W_\lambda$ depends on $\lambda$. To proceed, we need an estimate similar to \eqref{4.21} with respect a norm independent on $\lambda$. To this end, we consider the asymptotic behavior of the norm of $W_\lambda$ as $|\lambda| \to \infty$.

We revisit \eqref{4.8} and note that the asymptotic behavior of $B_{\lambda}$ determines how the desired estimate can be obtained. We have
%So, we compare the norms $\|Y\|_{W_\lambda}$ and $\|Y\|_{W}$, where $Y=(y_1,y_2,y_3)^t$ and $W:=S_\infty\times H^1_0(\Omega)$. By \eqref{4.8}, we have that $$\|Y\|^2_{W_\lambda}=(B_\lambda\nabla y_1,\nabla y_1)+\|y_2\|^2_\rho+\|y_3\|^2_{H^1_0(\Omega)},$$
\begin{equation}\label{4.22}
\begin{aligned}
(B_\lambda\nabla v_1,\nabla v_1)&=(\lambda \mathfrak{T}\{(\lambda I+\breve{\eta}^{-1}\,\breve{C})^{-1}(\breve{C}e[v_1]\breve{I})\},\nabla v_1)\\
&= (\mathfrak{T}\{( I+\lambda^{-1}\breve{\eta}^{-1}\,\breve{C})^{-1}(\breve{C}e[v_1]\breve{I})\},\nabla v_1)\\
&\leq (\mathfrak{T}\{\breve{C}e[v_1]\breve{I}\},\nabla v_1)+l_1\lambda^{-1}(\mathfrak{T}\{\breve{C}e[v_1]\breve{I}\},\nabla v_1)\\
&\leq (C\nabla v_1,\nabla v_1)+l_2\lambda^{-1}\|\nabla y_1\|^2_{L^2(\Omega)}
\end{aligned}
\end{equation}
with positive constants $l_1,l_2$. Since 
\begin{equation*}
\begin{aligned}
\|Y\|^2_{W}=(C\nabla y_1,\nabla y_1)+\|y_2\|^2_\rho+\|y_3\|^2_{H^1_0(\Omega)},
\end{aligned}
\end{equation*}
and using \eqref{4.22}, we obtain the estimates
\begin{equation}\label{4.23}
\begin{aligned}
\|Y\|^2_{W}&\leq (1+k_1\lambda^{-1})\|Y\|^2_{W_\lambda},\\
\|Y\|^2_{W_\lambda}&\leq (1+k_2\lambda^{-1})\|Y\|^2_{W},
\end{aligned}
\end{equation}
where $k_1>0, k_2>0$ are some constants.
Combining \eqref{4.21} and \eqref{4.23}, we have, for $\lambda > m\geq 1$, that
\begin{equation}\label{4.24}
\begin{aligned}
\|U\|^2_{W}&\leq (1+k_1\lambda^{-1})\|U\|^2_{W_\lambda}\\
&\leq (1+k_1\lambda^{-1})(\lambda-m)^{-2}\|\mathfrak{Z}F\|^2_{W_\lambda}\\
&\leq (1+k_1\lambda^{-1})(\lambda-m)^{-2}(1+k_2\lambda^{-1})\|\mathfrak{Z}F\|^2_{W}\\
&\leq (1+k\lambda^{-1})(\lambda-m)^{-2}\|\mathfrak{Z}F\|^2_{W},
\end{aligned}
\end{equation}
where $k$ is given by
\begin{equation}\label{def k}
k:=1+k_1+k_2+k_1k_2.
\end{equation}

For $\lambda > m+k$, we have the elementary estimate,
\begin{equation}\label{4.25}
\begin{aligned}
(1+k\lambda^{-1})(\lambda-m)^{-2}&\leq (1+k(\lambda-m)^{-1})(\lambda-m)^{-2}\\
&=  (\lambda-m-k)^{-2} (\lambda-m-k)^{2}(\lambda-m)^{-2}(1+k(\lambda-m)^{-1})\\
&=  (\lambda-m-k)^{-2} (1-k(\lambda-m)^{-1})^{2}(1+k(\lambda-m)^{-1})\\
&\leq  (\lambda-m-k)^{-2} (1-k(\lambda-m)^{-1})(1+k(\lambda-m)^{-1})\\
&\leq  (\lambda-m-k)^{-2}.
\end{aligned}
\end{equation}
Combining \eqref{4.24} and \eqref{4.25}, we have that for $\lambda > m+k$, the following estimate holds,
\begin{equation}\label{4.26}
\begin{aligned}
\|U\|^2_{W}&\leq (\lambda-m-k)^{-2}\|\mathfrak{Z}F\|^2_{W}.
\end{aligned}
\end{equation}
In terms of $V$, \eqref{4.26} is given as
\begin{equation}\label{4.27}
\Vert V\Vert_{W_Z}\le (\lambda-m-k)^{-1}\Vert F\Vert_{W_Z},\,\,\lambda>m+k,
\end{equation}
where $W_Z$ and the domain $D(A_Z)$ are defined as 
\begin{equation}\label{Wz}
W_Z:=S_\infty\times L^2(\Omega),\,\,
D(A_Z):=\{V^0\in W_Z: A_z V^0\in\mathcal{L}^2(\Omega)\}.
\end{equation}

\medskip

\noindent
\textbf{The case $\lambda<0$}. It is clear that if $\lambda\leq -l_0$ for a large enough $l_0$, then $ (V,V')_{S_\lambda} $ in \eqref{4.8} is a norm. We repeat the arguments from \eqref{4.8} to \eqref{4.16}, and conclude that 
\begin{equation}\label{4.31}
\begin{aligned}
\|(\lambda I-\tilde{A}_u)^{-1}\|, \quad \lambda\leq -l_0.
\end{aligned}
\end{equation}
Combining \eqref{eq:tAeq} and \eqref{4.31}, we have for $\lambda\leq -l_1$, $l_1$ sufficiently large, 
\begin{equation}\label{4.32}
\begin{aligned}
\|(u_1,u_2)\|_{S_\lambda}&\leq (-\lambda)^{-1}\|(f_1,f_2-\rho^{-1}{\rm div}\,\mathfrak{T}\{(\lambda I+\breve{\eta}^{-1}\breve{C})^{-1}\breve{C}(\mathfrak{z}\omega)\})\|_{S_\lambda}\\
&\leq (-\lambda)^{-1}\|(f_1,f_2)\|_{S_\lambda}+m_1\lambda^{-2}\|\mathfrak{z}\omega\|_{H^1_0(\Omega)};
\end{aligned}
\end{equation}
here, $m_1 \geq l_1 > 0$ in \eqref{4.18} large so that we can use  a common notation. With the third equation of \eqref{4.6} and \eqref{4.32}, we obtain, for $\lambda\leq -l_1$,
\begin{equation}\label{4.33}
\begin{aligned}
\|\phi\|_{H^1_0(\Omega)} &=\|(\lambda I+\breve{\eta}^{-1}\breve{C})^{-1}[\breve{\eta}^{-1}\{\breve{C}(e[u_1]\breve{I})\}+\mathfrak{z}\omega]\|_{H^1_0(\Omega)}\\
&\leq m_2(-\lambda)^{-1}\|(u_1,u_2)\|_{S_\lambda}+(-\lambda)^{-1}(1+m_3(-\lambda)^{-1})\|\mathfrak{z}\omega\|_{H^1_0(\Omega)}\\
&\leq m_4 \lambda^{-2}\|(f_1,f_2)\|_{S_\lambda}+(-\lambda)^{-1}(1+m_5(-\lambda)^{-1})\|\mathfrak{z}\omega\|_{H^1_0(\Omega)}.
\end{aligned}
\end{equation}
We have taken $m_2, m_3, m_4, m_5$ in \eqref{4.19} large to be able using a common notation. We then repeat the arguments from \eqref{4.20} to \eqref{4.27}, and get
\begin{equation}\label{4.34}
\Vert V\Vert_{W_Z}\le (-\lambda-m-k)^{-1}\Vert F\Vert_{W_Z},\,\,-\lambda>m+k.
\end{equation}

\medskip\medskip

\noindent
Combining \eqref{4.27} and \eqref{4.34}, we find that
\begin{equation}\label{4.35}
\Vert V\Vert_{W_Z}\le (|\lambda|-m-k)^{-1}\Vert F\Vert_{W_Z},\,\,|\lambda|>m+k.
\end{equation}
Therefore, we obtained \eqref{Hille-Yosida type resolvent estimate} for $A_Z$ with $\beta:=m+k$.
\end{proof}

\section{$C_0$-group for the AD System}

In this section, based on the resolvent estimates in the previous section, we will discuss the generation of a $C_0$-group for the AD system. For that, we first recall the following standard theorem (see Corollary of Theorem 5.6 on page 296 \cite{Mizo} or Theorem 6.3 on page 23 \cite{Pazy}).

\begin{theorem}\label{cor5.2}
Let $\mathcal{A}$ be a closed operator on a Banach space $X$ having a dense domain of definition in $X$. If there exists $\beta\ge 0$ such that for
 $|\lambda|>\beta$, the resolvent $(\lambda I-\mathcal{A})^{-1}$ of $\mathcal{A}$ exists and satisfies
\begin{equation}\label{5.3}
\begin{aligned}
\|(\lambda I-\mathcal{A})^{-1}\|\leq (|\lambda|-\beta)^{-1},\,\,\,|\lambda|>\beta,
\end{aligned}
\end{equation}
then $\mathcal{A}$ generates a $C_0$-group on $X$.
\end{theorem}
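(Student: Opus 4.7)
The plan is to reduce Theorem~\ref{cor5.2} to the classical Hille--Yosida theorem for $C_0$-semigroups applied separately to $\mathcal{A}$ and $-\mathcal{A}$, and then glue the two semigroups into a group. The hypothesis is symmetric in the two half-lines $\lambda>\beta$ and $\lambda<-\beta$, which is exactly what makes this work.

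First I would apply the classical Hille--Yosida theorem to $\mathcal{A}$. The hypothesis for $\lambda>\beta$ gives $\|(\lambda I-\mathcal{A})^{-1}\|\le(\lambda-\beta)^{-1}$, and submultiplicativity of the operator norm immediately yields $\|(\lambda I-\mathcal{A})^{-m}\|\le(\lambda-\beta)^{-m}$ for every $m\ge 1$. Together with the standing assumptions that $\mathcal{A}$ is closed and densely defined, this is the Hille--Yosida estimate with constant $M=1$, so $\mathcal{A}$ generates a $C_0$-semigroup $T_+(t)$ on $X$ satisfying $\|T_+(t)\|\le e^{\beta t}$ for $t\ge 0$.

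Next I would apply the same reasoning to $-\mathcal{A}$. Substituting $\mu=-\lambda$ in the second half of the hypothesis, one has $(\mu I-(-\mathcal{A}))^{-1}=(\mu I+\mathcal{A})^{-1}=-(\lambda I-\mathcal{A})^{-1}$, so $\|(\mu I-(-\mathcal{A}))^{-1}\|\le(\mu-\beta)^{-1}$ for $\mu>\beta$. Since $-\mathcal{A}$ is also closed with the same dense domain, the Hille--Yosida theorem produces a second $C_0$-semigroup $T_-(t)$ with generator $-\mathcal{A}$ and $\|T_-(t)\|\le e^{\beta t}$.

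The crux is showing that $T_+$ and $T_-$ are mutual inverses, i.e.\ $T_+(t)T_-(t)=T_-(t)T_+(t)=I$ for all $t\ge 0$. For $x\in D(\mathcal{A})=D(-\mathcal{A})$, the orbit $T_-(t)x$ stays in $D(\mathcal{A})$ (invariance of the domain under its semigroup), so $u(t):=T_+(t)T_-(t)x$ is strongly differentiable. Using that $T_+(t)$ commutes with $\mathcal{A}$ on $D(\mathcal{A})$, the product rule gives $u'(t)=\mathcal{A}T_+(t)T_-(t)x+T_+(t)(-\mathcal{A})T_-(t)x=0$, so $u(t)\equiv x$ on $D(\mathcal{A})$. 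Boundedness of $T_+(t)T_-(t)$ and density of $D(\mathcal{A})$ extend the identity to all of $X$; the symmetric argument with the roles swapped gives $T_-(t)T_+(t)=I$.

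Finally I would define $T(t):=T_+(t)$ for $t\ge 0$ and $T(t):=T_-(-t)$ for $t<0$. Strong continuity at $t=0$ is inherited from $T_\pm$, and the group law $T(s+t)=T(s)T(t)$ is verified by a routine case analysis on the signs of $s$, $t$, $s+t$, invoking the two semigroup laws for $T_\pm$ together with the inverse relations just established. Since $T$ restricted to $t\ge 0$ coincides with $T_+$, its infinitesimal generator is $\mathcal{A}$. The main obstacle is the inverse identity $T_+(t)T_-(t)=I$; everything else is either the classical Hille--Yosida theorem, a simple change of variables, or bookkeeping. The subtlety there is to justify the differentiation of $T_+(t)T_-(t)x$, which hinges on the domain-invariance of $D(\mathcal{A})$ under $T_-$ and the commutation of $T_+(t)$ with its generator on that domain.
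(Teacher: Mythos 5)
Your proposal is correct, but note that the paper does not prove this theorem at all: it is quoted as a standard result with a citation to Mizohata (Corollary of Theorem 5.6, p.~296) and Pazy (Theorem 6.3, p.~23). Your argument --- applying the Hille--Yosida theorem with $M=1$ to both $\mathcal{A}$ and $-\mathcal{A}$, proving $T_+(t)T_-(t)=T_-(t)T_+(t)=I$ by differentiating $T_+(t)T_-(t)x$ for $x\in D(\mathcal{A})$, and gluing the two semigroups into a group --- is precisely the standard proof given in those references, so it matches the intended justification rather than offering a different route.
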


\begin{comment}
\label{rem-semigroup}
As a reference for this theorem, we give Corollary of Theorem 5.6 on page 296 of the book {\rm\cite{Mizo}} and also Theorem 6.3 on page 23 of the book {\rm\cite{Pazy}}.
We also have $\rho(\mathcal{A})\supset\{\lambda: \text{\rm Re}\,\lambda>\beta\}$ and
\begin{equation}\label{contraction}
\Vert(\lambda I-\mathcal{A})^{-1}\Vert\le (\text{\rm Re}\,\lambda-\beta)^{-1},\,\,\text{\rm Re}>\beta,    
\end{equation}
where $\text{{\rm Re}}\,\lambda$ is the real part of $\lambda$ and $\rho(\mathcal{A})$ is the resolvent set of $\mathcal{A}$.
\end{comment}

\begin{comment}   
As a consequence of \eqref{4.7} and \eqref{4.27}, Remark \ref{rem-semigroup} yields the following inequality
\begin{equation}\label{5.2}
\Vert(\lambda I-A_Z)^{-1}\Vert\le (\text{\rm Re}\lambda-\beta)^{-1},\,\,\,\text{\rm Re}\lambda>\beta:=m+k, 
\end{equation}
where $\Vert\cdot\Vert$ is the operator norm on $W_Z$. Therefore, $A_Z$ generates a $C_0-$semigroup of contraction on $W_Z$ defined by \eqref{Wz}.
\end{comment}

With Theorem \ref{cor5.2}, the resolvent estimate in the previous section implies the following.

\begin{theorem}\label{semigroup Az}
$A_Z$ generates a $C_0$-group on $W_Z$ and $\rho(A_Z)\supset\{\lambda\in{\mathbb R}: |\lambda|>\beta\}$, where $\rho(A_Z)$ denotes the resolvent set of $A_Z$, $\beta=m+k$ and $m$ and $k$ are defined in \eqref{def m} and \eqref{def k}, respectively.   
\end{theorem}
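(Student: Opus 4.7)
The plan is to verify the three hypotheses of Theorem \ref{cor5.2} for $\mathcal{A} = A_Z$ acting on the Banach (in fact Hilbert) space $X = W_Z$, and then apply that theorem directly. The Proposition proved in Section 4 already furnishes the resolvent bound \eqref{Hille-Yosida type resolvent estimate} for $|\lambda| > \beta$ with $\beta := m+k$, so the only remaining points are (a) the existence of $(\lambda I - A_Z)^{-1}$ as a bounded operator on $W_Z$ for real $\lambda$ with $|\lambda| > \beta$, (b) closedness of $A_Z$, and (c) density of $D(A_Z)$ in $W_Z$. The inclusion $\{\lambda \in \mathbb{R} : |\lambda| > \beta\} \subset \rho(A_Z)$ is then immediate from (a).

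For existence, I would unpack the construction in the proof of the Proposition: the bijectivity of $\lambda I - \tilde{A}_u : D(\tilde{A}_u) \to S_\lambda$ was obtained via the Lax–Milgram argument applied to the sesquilinear form $\mathfrak{a}'$ in \eqref{sesquilinear form a'}, which is coercive by Korn's inequality. Combining this with the algebraic reduction \eqref{eq:tAeq} solves the full system \eqref{4.6}, hence yields a right inverse of $\lambda I - A_Z$; the resolvent estimate \eqref{Hille-Yosida type resolvent estimate} shows it is also a left inverse (since it is a genuine two-sided bounded inverse on $W_Z$ in the $W_Z$-norm, after passing from $W_\lambda$ via \eqref{4.23}). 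Thus $(\lambda I - A_Z)^{-1} \in \mathcal{B}(W_Z)$ for every real $\lambda$ with $|\lambda| > \beta$.

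Closedness of $A_Z$ then follows from a standard observation: if $(\lambda_0 I - A_Z)^{-1}$ is bounded on $W_Z$ for some $\lambda_0$, then $\lambda_0 I - A_Z$ is closed, and hence so is $A_Z$. For density, I would argue that $D(A_Z)$, defined in \eqref{Wz}, contains the dense subset $D(\tilde{A}_u) \times H_0^1(\Omega)$ (up to the identification between $A_Z$ and $\tilde{A}$ coming from \eqref{4.3} and the fact that $\mathfrak{Z}$ is an isomorphism on $W_Z$); the density of $D(\tilde{A}_u)$ in $S_\infty$ was already noted in the Proposition as a consequence of the coercivity of $\mathfrak{a}'$ and Korn's inequality, so $\overline{D(A_Z)} = W_Z$.

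With (a), (b), (c) in hand and the symmetric resolvent estimate \eqref{4.35} valid for all real $\lambda$ with $|\lambda| > \beta$ (note that both the $\lambda > 0$ and $\lambda < 0$ cases were treated separately in the Proposition, yielding a two-sided bound), Theorem \ref{cor5.2} applies and yields a $C_0$-group generated by $A_Z$ on $W_Z$, together with $\rho(A_Z) \supset \{\lambda \in \mathbb{R} : |\lambda| > \beta\}$. The main obstacle I expect is not analytical but notational: making sure that the operator identifications between $A$, $A_Z = \mathfrak{Z}^{-1} A \mathfrak{Z}$, and $\tilde{A}$ transfer the domain description and the resolvent set cleanly from $W_\lambda$ to the $\lambda$-independent space $W_Z$; the estimates \eqref{4.23} ensure that the two norms are equivalent (with constants deteriorating only as $\lambda^{-1}$), so this transfer is safe for $|\lambda|$ large.
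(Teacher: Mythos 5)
Your proposal is correct and follows essentially the same route as the paper: the paper's proof is precisely the application of Theorem \ref{cor5.2} to the resolvent estimate \eqref{Hille-Yosida type resolvent estimate}, with the bijectivity of $\lambda I-\tilde{A}_u$ (via the coercive form and Korn's inequality) and the density $\overline{D(\tilde{A}_u)}=S_\lambda$ already recorded inside the Proposition's proof. Your extra explicit checks of closedness and of the transfer of domains through the isomorphism $\mathfrak{Z}$ are exactly the implicit steps the paper leaves to the reader, so nothing is genuinely different.
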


\paragraph{Abstract Cauchy problem} As an immediate consequence of this theorem, we obtain

\begin{theorem}\label{inhomog CP}${}$
\begin{itemize}
\item [{\rm (i)}] 
Let $F\in C^1([0,\infty);W_Z)$ and consider the following Cauchy problem
\begin{equation}\label{abstract C-problem}
\left\{
\begin{array}{ll}
\displaystyle\frac{d}{dt}\,V(t)=A_Z V(t)+F(t),\quad t>0,\\
\\
V(0)=V^0\in D(A_Z).
\end{array}
\right.
\end{equation}
Then, there exists a unique strong solution $V=V(t)\in C^0([0,\infty);W_Z)\cap C^1((0,\infty);W_Z)$ of \eqref{abstract C-problem}. Here, in addition to the usual conditions for the solution of \eqref{abstract C-problem}, the strong solution $V(t)$ has to be differentiable almost everywhere in $(0,\infty)$ and $dV(t)/dt\in L^1((0,T); W_Z)$ for each $T>0$. 
\item [{\rm (ii)}]
Concerning the regularity of the solution of \eqref{abstract C-problem}, let $F\in C^{m+1}([0,\infty);W_Z)$ for $m\in{\mathbb N}$ and assume that the condition $V^\ell\in D(A_Z),\,1\le l\le m$, referred to as the compatibility condition of order $m$, holds. Here, the $V^\ell$'s are defined as
$$
V^\ell:=A_Z V^{l-1}+F^{(l-1)}(0)\,\,\text{\rm with $F^{(l-1)}:=\frac{d^{l-1}F}{dt^{l-1}}$}.
$$
Consider  
\begin{equation}\label{formula for V}
V(t):=\sum_{l=0}^{m-1}\frac{t^l}{l!}V^l+\int_0^t\frac{(t-s)^{m-1}}{(m-1)!} \tilde V(s)\, ds,
\end{equation}
where $\tilde V\in C^0([0,\infty);W_Z)\cap C^1((0,\infty);W_Z)$ is the unique strong solution to the Cauchy problem
\begin{equation}\label{abstract C-problem for tilde V}
\left\{
\begin{array}{ll}
\displaystyle\frac{d}{dt}\,\tilde V(t)=A_Z \tilde V(t)+F^{(m)}(t),\quad t>0,\\
\\
\tilde V(0)=V^m.
\end{array}
\right.
\end{equation}
Then, $V=V(t)\in C^{m}([0,\infty); W_Z)\cap C^{m+1}((0,\infty);W_Z)$ is the unique strong solution to \eqref{abstract C-problem} in the space
$C^0([0,\infty); W_Z)\cap C^1((0,\infty); W_Z)$.
\item[{\rm (iii)}] Statements {\rm (i)} and {\rm (ii)} hold upon replacing $t>0$, $[0,\infty)$ and $(0,\infty)$ by $t<0$, $(-\infty,0]$ and $(-\infty,0)$, respectively. Furthermore, the two solutions in the above two different intervals match at $t=0$ up to $m$-th order derivatives if $F(t)\in C^{m+1}({\mathbb R}; W_Z)$, and this $F(t)$ together with $V^0\in D(A_Z)$ satisfy the compatibility condition of order m. Hence, {\rm (ii)} implies the existence of a unique strong solution $V\in C^m({\mathbb R}; W_z)$ to \eqref{abstract C-problem}.
\end{itemize}
\end{theorem}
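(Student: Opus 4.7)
Denote by $\{T(t)\}_{t\in{\mathbb R}}$ the $C_0$-group on $W_Z$ generated by $A_Z$, whose existence is guaranteed by Theorem~\ref{semigroup Az}. The plan is to reduce all three statements to the standard Duhamel representation
\[
V(t)=T(t)V^0+\int_0^t T(t-s)F(s)\,ds,
\]
and then bootstrap to higher regularity via the Taylor-remainder identity \eqref{formula for V}.

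For part (i), I would first check that the function $V(t)$ defined by this Duhamel formula lies in $C^0([0,\infty);W_Z)$ by the strong continuity of $T(\cdot)$ and the continuity of $F$. To obtain differentiability and the equation on $(0,\infty)$, I would rewrite the convolution as $\int_0^t T(s)F(t-s)\,ds$, which, since $F\in C^1$, is differentiable in $t$ with derivative $T(t)F(0)+\int_0^t T(s)F'(t-s)\,ds$. Combined with $V^0\in D(A_Z)$, which makes $T(t)V^0$ of class $C^1$ with derivative $A_ZT(t)V^0$, one concludes $V\in C^0([0,\infty);W_Z)\cap C^1((0,\infty);W_Z)$ and $dV/dt=A_ZV+F$; the absolute continuity in $(0,\infty)$ with $dV/dt\in L^1_{\rm loc}$ is a byproduct. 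Uniqueness is the standard argument: if $W$ were a second strong solution, the function $s\mapsto T(t-s)W(s)$ on $[0,t]$ has derivative zero almost everywhere, so $W=V$.

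For part (ii), I would proceed by induction on $m$. Assume the statement for $m-1$. Given $F\in C^{m+1}$ and the compatibility hypotheses $V^\ell\in D(A_Z)$ for $1\le\ell\le m$, the Cauchy problem \eqref{abstract C-problem for tilde V} has data to which part (i) applies, yielding the unique strong solution $\tilde V\in C^0([0,\infty);W_Z)\cap C^1((0,\infty);W_Z)$. Substituting $\tilde V$ into \eqref{formula for V} and differentiating termwise, the polynomial part produces $V^\ell$ at $t=0$ for each $\ell\le m$, the integral remainder and its first $m-1$ derivatives vanish at $t=0$, and the $m$-th derivative of the remainder equals $\tilde V(t)$. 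The critical algebraic check is that differentiating \eqref{formula for V} once and invoking the recursion $V^\ell=A_ZV^{\ell-1}+F^{(\ell-1)}(0)$ reproduces $dV/dt=A_ZV+F$; this reduces to verifying, order by order, that the polynomial pieces combine to give $A_Z$ applied to the preceding piece plus the corresponding derivative of $F$ at $0$. The regularity $V\in C^m([0,\infty);W_Z)\cap C^{m+1}((0,\infty);W_Z)$ then follows directly from the regularity of $\tilde V$. Uniqueness in this class is inherited from (i).

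For part (iii), I would use the full $C_0$-group structure from Theorem~\ref{semigroup Az}: since $-A_Z$ also generates a $C_0$-semigroup, the change of variables $t\mapsto -t$ turns the backward Cauchy problem into a forward one of the same type, to which (i) and (ii) apply verbatim. Matching at $t=0$ is automatic, because both the forward and backward strong solutions produced this way agree at $t=0$, up to order $m$ derivatives, with the common sequence $V^0,V^1,\dots,V^m$ dictated recursively by the equation and the compatibility conditions; hence the two one-sided solutions glue into a single $V\in C^m({\mathbb R};W_Z)$.

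The most delicate bookkeeping step I anticipate is verifying in (ii) that the Taylor-remainder formula \eqref{formula for V} indeed produces a strong solution satisfying the Cauchy problem, i.e. checking the vanishing of the first $m-1$ derivatives of the integral remainder at $t=0$ together with the alignment of the polynomial part with $A_ZV+F$ after differentiation. Once these identities are in place, the result is a straightforward consequence of part (i) and the standard semigroup machinery.
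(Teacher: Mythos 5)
Your plan is correct and follows essentially the same route as the paper, which for (i) invokes the standard Duhamel/variation-of-constants theory (citing Mizohata and Pazy), for (ii) verifies the Taylor-remainder formula \eqref{formula for V} built from the solution $\tilde V$ of \eqref{abstract C-problem for tilde V}, and for (iii) uses the $C_0$-group from Theorem~\ref{semigroup Az} to run time backwards and match derivatives at $t=0$. The only point the paper singles out explicitly—interchanging $A_Z$ with $\int_0^t\cdot\,ds$ in the verification of \eqref{formula for V}, justified by the closedness of $A_Z$—is exactly the "delicate bookkeeping step" you flag, so your proposal matches the paper's argument in substance.
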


We refer to Theorem 5.6 \cite{Mizo} and Section 4.2 \cite{Pazy} for the study of the abstract Cauchy problem and the generation of a $C_0$-semigroup; we mention Section 1.6 \cite{Pazy}, where the relation between the $C_0$-semigroup and the $C_0$-group is established. As for \eqref{formula for V}, there is a similar formula in \cite{Dafermos, Ikawa}. By quite a formal argument, except for verifying the commutativity of $A_Z$ and $\int_0^t\ \cdot\ ds$, it follows that $V(t)$ given by \eqref{formula for V} is the unique solution to \eqref{abstract C-problem} in the space
$C^0([0,\infty); W_Z)\cap C^1((0,\infty); W_Z)$. The verification of the mentioned commutativity can be shown by using that $A_Z$ is a closed operator.

\begin{comment}
\begin{remark}\label{inhomog CP remark}
The assumptions on $F$ of this theorem can be relaxed to $F\in C^1({\mathbb R};W_Z)$ by using a baseline energy inequality derived from \eqref{4.15}. Further, deriving a higher energy inequality using the usual mollification method with respect to time, then we can show that $V\in C^2({\mathbb R}; W_Z)$ if $V^0,\,F$ satisfy the compatibily condition of order $2$ (consult \cite{Ikawa, Wloka} for the details). Here the compatibility condition of order 2 is the condition for $V^0,\,F$ to have $V$ satisfying the above regularity. 
\end{remark}
\end{comment}

We conclude this section by observing that $0\not\in\rho(A_Z)$, that is, $0\not\in\rho(A)$. This yields an obstruction to guaranteeing decay properties of the solutions, which would require that  $\{i \,\mu\ :\ \mu \in {\mathbb R}\} \subset \rho(A_Z)$. To show this, let $U\in W$ satisfy $AU=0$. Then, using \eqref{2.6}, we have
\begin{equation}
\left\{
\begin{array}{rl}
u_2&=0,\\
{\rm div}\,\mathfrak{T}\{\breve{C}\,(e[u_1]\breve{I})\}-{\rm div}\,\mathfrak{T}\{\breve{C}\phi\}&=0,\\
\breve{C}(e[u_1]\breve{I})-\breve{C}\phi&=0.
\end{array}
\right.
\end{equation}
Now, let $0\not=\phi\in L^2(\Omega)$ and search for a $u_1$ that satisfies $e[u_1]\breve{I}=\phi$. Then $u_1$ must be given as the unique solution $u_1\in K(\Omega)$ of the following boundary value problem:
\begin{equation}\label{stationary sol}
 \left\{
\begin{array}{rl}
\text{\rm div}\,\mathfrak{T}\{\breve{C}(e[u_1]\breve{I}\}&=\text{\rm div}\,\mathfrak{T}\{\breve{C}\phi\},\\[0.25cm]
u_1&=0\quad\text{on $\Gamma_D$},\\
(\mathfrak{T}\{\breve{C}(e[u_1]\breve{I})\})\nu&=
(\mathfrak{T}\{\breve{C}\phi\})\nu\quad\text{\rm on $\Gamma_N$}.
\end{array}
 \right.
\end{equation}
We note that the above boundary condition on $\Gamma_N$ comes from the boundary condition $\sigma[u_1,\phi]\nu=0$ on $\Gamma_N$, which is consistent with the occurrence of the inhomogeneous term in the first equation of \eqref{stationary sol}. As a consequence, we have $0\not\in\rho(A)$.

%This is a bad news for the semigroup $e^{t A_Z}$ to have the limiting amplitude principle because we need to have at least $\{i\,\mu:\,\mu\in{\mathbb R}\}\subset\rho(A_Z)$ and the exponential decay of $\Vert e^{tA_Z}\Vert$ as $t\rightarrow\infty$.

\section{The reduced system, $C_0$-group and contractive $C_0$-semigroup}

\begin{comment}
We define the energy $\bar{E}(t)$ as
\begin{equation}\label{7.1}
\begin{aligned}
\bar{E}(t)=\frac{\rho}{2}(\|\partial_tu\|_{L^2(\Omega)}^2+\|\partial^2_tu\|_{L^2(\Omega)}^2)+\frac{1}{2}(\|\psi\|_{L^2(\Omega)}^2+\|\partial_t \psi \|_{L^2(\Omega)}^2)
\end{aligned}
\end{equation}
with $\psi=e[u] \breve{I}-\phi$.
\end{comment}

To mitigate the fact that $0\not\in\rho(A)$, we introduce a reduction of the original system. We then establish that this system generates a $C_0$-group.

\subsection*{The reduced system}

We let $v=\partial_t u$ and $\psi=e[u]\breve{I}-\phi$. We observe that \eqref{2.1} contains the following closed subsystem,
\begin{equation}\label{6.1}
\left\{
\begin{array}{ll}
\partial_tv=\rho^{-1}{\rm div}\mathfrak{T}\{\breve{C}\psi\},\\
\partial_t\psi=-\breve{\eta}^{-1}\breve{C}\psi+e[v]\breve{I},\\[0.25cm]
v=0\quad\text{on}\quad\Gamma_D, \quad
(\mathfrak{T}\{\breve{C}\psi\})\nu=0\quad\text{on}\quad \Gamma_N,\\[0.25cm]
(v,\psi)=(v^0,e[u^0]\breve{I}-\phi^0)\quad\text{at $t=0$}.
\end{array}
\right.
\end{equation}

\begin{remark}
By assuming that the initial value for $u$ in \eqref{6.1} is $u^0$, we obtain a solution $(u,v,\phi)$ of \eqref{2.1} from the relation $v=\partial_t u,\,\psi=e[u]\breve{I}-\phi$.
\end{remark}

\medskip

Next, we rewrite this initial boundary value problem as an abstract Cauchy problem. To begin with, we let
\begin{equation}\label{6.2}
L\cdot=
\begin{pmatrix}
0                &        \rho^{-1}{\rm div}\,\mathfrak{T}\{\breve{C}\cdot\} \\
e[\cdot]\breve{I}    &        -\breve{\eta}^{-1}\breve{C}\cdot
\end{pmatrix}
\end{equation}
and its domain $D(L)$ be given as
$$ D(L):=\{ (v,\psi)\in K(\Omega)\times L^2(\Omega) : L
\begin{pmatrix}
v                \\
\psi
\end{pmatrix}
\in L_\rho^2(\Omega)\times L^2(\Omega)\}. $$
We equip Hilbert space $H:=L^2_\rho(\Omega)\times L^2(\Omega)$ with the inner product
\begin{equation}\label{H-inner product}
(V,V')_H:=(v,v')_\rho+(\psi,\psi')_{\breve{C}}
\end{equation}
for $V=(v,\psi)^{\it t},\,\,V'=(v',\psi')^{\it t}$, where  $(\psi,\psi')_{\breve{C}}:=(\breve{C}\psi,\psi')$ for $\psi,\,\psi'\in L^2(\Omega)$. This inner product is equivalent to the standard inner product of $L_\rho^2(\Omega)\times L^2(\Omega)$. It can be shown that $\overline{D(L)}=H$ in a way similar to how this was done for $\tilde{A}_u$. Then, the abstract Cauchy problem takes the form
\begin{equation}\label{abstract CP}
\left\{
\begin{array}{ll}
\partial_t \begin{pmatrix}
v                \\
\psi
\end{pmatrix} =L \begin{pmatrix}
v                \\
\psi
\end{pmatrix},\\
\\
(v,\psi) =(v^0,\psi^0)\quad\text{\rm at $t=0$},
\end{array}
\right.
\end{equation}
where $\psi^0:=e[u^0]\breve{I}-\phi^0$.
\begin{comment}
    
Let $\lambda$ be the eigenvalue of $L$. Then, we have that
\begin{equation}\label{6.3}
\left\{
\begin{aligned}
&\lambda v=\rho^{-1}{\rm div}\,\mathfrak{T}\{\breve{C}\psi\}=\rho^{-1}{\rm div}\,\mathfrak{T}\{\breve{C}(\lambda I+\breve{\eta}^{-1}\breve{C})^{-1}(\breve{C}e[v]\breve{I})\},\\
& \psi=(\lambda I+\breve{\eta}^{-1}\breve{C})^{-1}(e[v]\breve{I}).
\end{aligned}
\right.
\end{equation}
If $\lambda=0$ in \eqref{6.3}, then $\psi=0=v$. So, $0$ is not an eigenvalue of $L$.
\end{comment}

We consider the $\lambda-$equation associated with the first equation of \eqref{abstract CP}:
\begin{equation}\label{6.4}
(\lambda I-L)
\begin{pmatrix}
v                \\
\psi
\end{pmatrix}
=
\begin{pmatrix}
f                \\
\omega
\end{pmatrix}\in H\,\,
\text{with $\lambda=\sigma+i\mu,\,\,\sigma, \mu\in{\mathbb R}$}.
\end{equation}
This equation is equivalent to the system
\begin{equation}\label{6.5}
\left\{
\begin{aligned}
\lambda v&=\rho^{-1}{\rm div}\mathfrak{T}\{\breve{C}\psi\}+f,\\
\lambda\psi&= -\breve{\eta}^{-1}\breve{C}\psi+e[v]\breve{I}+\omega,
\end{aligned}
\right.
\end{equation}
supplemented with the boundary condition given in \eqref{6.1}. We recall the definition of the reduced system. Since $\eta_j^{-1} C_j>0$ for $1\le j\le n$, there exists a $\delta>0$ such that 
\begin{equation}\label{cond-sigma}
\sigma I+\eta_j^{-1}C_j\ge\delta,\,\,1\le j\le n\,\, 
\end{equation}
holds for $\sigma\geq-\delta_0$ with $\delta_0>0$. Given $v$, $\psi$ can be obtained from the second equation of \eqref{6.5} based on \eqref{cond-sigma}. Substituting the result into the first equation of \eqref{6.5}, gives
\begin{equation}\label{6.8bis}
 \lambda v=\rho^{-1}\text{\rm div}\mathfrak{T}\{\breve{C}(\lambda\breve{I}+\breve{\eta}^{-1}\breve{C})^{-1}(e[v]\breve{I})\}+\rho^{-1}\text{\rm div}\mathfrak{T}\{\breve{C}(\lambda\breve{I}+\breve{\eta}^{-1}\breve{C})^{-1}\omega\}+f. 
\end{equation}
The boundary condition, $(\breve{C}\psi)\nu=0$ on $\Gamma_N$, becomes
\begin{equation}\label{new boundary condition}
\{\mathfrak{T}(\{\breve{C}(\lambda\breve{I}+\breve{\eta}^{-1}\breve{C})^{-1}(e[v]\breve{I})\})\}\nu=-\mathfrak{T}\{\breve{C}(\lambda\breve{I}+\breve{\eta}^{-1}\breve{C})^{-1}\omega\}\nu\quad\text{on $\Gamma_N$}.
\end{equation}
It follows readily that for a positive symmetric $K$,
\begin{equation}\label{split new1}
\begin{array}{ll}
(i\mu I+K)^{-1}=Q+iR,\\
\\
Q=K^{-1}-\mu^2 K^{-1}(\mu^2 I+K^2)^{-1},\,\,R=-\mu(\mu^2I+K^2)^{-1}.
\end{array}
\end{equation}
Applying this identity to $(\lambda\breve{I}+\breve{\eta}^{-1}\breve{C})^{-1}=(i\mu\breve{I}+K)^{-1}$,
%with $K=\sigma\breve{I}+\breve{\eta}^{-1}\breve{C}$. Then, since $Q=(\mu^2\breve{I}+K^2)^{-1}K$, 
leads to
\begin{equation}\label{split new2}
 \lambda v=\rho^{-1}\text{\rm div}\mathfrak{T}\{\mathcal{I}(e[v]\breve{I})\}+\rho^{-1}\text{\rm div}\mathfrak{T}\{\mathcal{I}\omega\}+f,  
\end{equation}
where $\mathcal{I}$ is given by
\begin{equation}\label{new I}
\mathcal{I}:=(K-i\mu\breve{I})(\mu^2\breve{I}+K^2)^{-1}\breve{C}.
\end{equation} 
\begin{comment}
Further, using the commutativity between $\breve{\eta}$ and $\breve{C}$, a direct computation gives us
\begin{equation}\label{algebraic eq}
 \mathfrak{T}\{\breve{C}(\lambda I+\breve{\eta}^{-1}\breve{C})^{-1}(e[v]\breve{I})\}=\left(|\eta| I-\lambda\sum_{j=1}^n\eta_j(\lambda I+\eta_j^{-1}C_j)^{-1}\right)e[v].
\end{equation}
Then, the first equation of \eqref{6.5} becomes
\begin{equation}\label{r-eq1}
\begin{array}{ll}
\lambda v=\rho^{-1}{\rm div}\{\left(|\eta|I-\lambda\sum_{j=1}^n\eta_j(\lambda I+\eta_j^{-1}C_j)^{-1}\right)e[v]\}\\
\qquad\qquad\quad
+\rho^{-1}{\rm div}[\mathfrak{T}\{\left(\breve{\eta}-\lambda\breve{\eta}(\lambda I+\breve{\eta}^{-1}\breve{C})^{-1}\right)\omega\}]+f.
\end{array}
\end{equation}
The boundary condition coming from $(v,\psi)\in D(L)$ are $v=0$ on $\Gamma_D$ and $\mathfrak\{\breve{C}\psi\}\nu=0$ on $\Gamma_N$, i.e. $(\mathcal{I}e[v])\nu=-[\mathfrak{T}\{\left(\breve{\eta}-\lambda\breve{\eta}(\lambda I+\breve{\eta}^{-1}\breve{C})^{-1}\right)\omega\}]\nu$ on $\Gamma_N$, where $\mathcal{I}$ is given by
\begin{equation}\label{mathcal-I}
\mathcal{I}:=|\eta|I-\lambda\sum_{j=1}^n\eta_j(\lambda+\eta_j^{-1}C_j)^{-1}.
\end{equation}
\end{comment}

\subsection*{Variational form}

We introduce a bilinear form, $B(v,w)$, on $K(\Omega)$ as follows,
\begin{equation}\label{bilinear-B}
B(v,w):=(\mathfrak{T}\{\mathcal{I} (e[v]\breve{I})\},e[w])+\lambda(v,w)_\rho,\,\,v,w\in K(\Omega).
\end{equation}
Then, the variational problem which is equivalent to the boundary value problem for \eqref{6.5} with the aforementioned boundary conditions is given by
\begin{equation}\label{variational prob}
B(v,w)=({\rm div}\mathfrak{T}\{\mathcal{I}\omega\},w)+(f,w)_\rho\quad\text{for all}\ w\in K(\Omega).
\end{equation}
Now, we split the bilinear form $B$ into two parts upon splitting $\mathcal{I}$ into its real and imaginary parts,
\begin{equation}\label{mathcal-I}
\mathcal{I}=K(\mu^2\breve{I}+K^2)^{-1}\breve{C}+i\{-\mu(\mu^2\breve{I}+K^2)^{-1}\breve{C}\}.
\end{equation}
\begin{comment}
In order to do that, for a positive symmetric $K$, we note that it is easy to show the following:
\begin{equation}\label{split}
\left\{\begin{array}{ll}
(i\mu I+K)^{-1}=Q+iR,\\
\\
Q=K^{-1}-\mu^2 K^{-1}(\mu^2 I+K^2)^{-1},\,\,R=-\mu(\mu^2I+K^2)^{-1}.
\end{array}
\right.
\end{equation}
Let $\lambda=0$ in \eqref{6.5}, then we have
\begin{equation}\label{6.6}
\left\{
\begin{aligned}
&\rho^{-1}{\rm div}\mathfrak{T}\{\breve{C}\psi\}+f=0,\\
&\breve{C}\psi=\breve{\eta}(e[v]\breve{I}+\omega).
\end{aligned}
\right.
\end{equation}
From \eqref{6.6}, we have
\begin{equation}\label{6.7}
\begin{aligned}
\rho^{-1}{\rm div}\mathfrak{T}\{\breve{\eta}e[v]\breve{I}\}=-f-\rho^{-1}{\rm div}\mathfrak{T}\{\breve{\eta}\omega\}.
\end{aligned}
\end{equation}
Since $f\in L^2(\Omega)$ and $\omega\in H^1(\Omega)$, the right hand side of \eqref{6.7} belongs to  $L^2(\Omega)$.
From \eqref{6.7}, $v\in H^2(\Omega)$.
Together with $0$ is not an eigenvalue of $L$, we have that $0\in \rho(L)$, here $\rho(L)$ is the resolvent set of $L$.

Then, by a bit tedious computation using \eqref{split}, we have that the real part ${\rm Re}\,\mathcal{I}$ of $\mathcal{I}$ and the imaginary part ${\rm Im}\,\mathcal{I}$ of $\mathcal{I}$ are given as follow:
\begin{equation}\label{r-eq2}
\left\{
\begin{array}{ll}
{\rm Re}\,\mathcal{I}=\sum_{j=1}^n C_j K_j(\mu^2I+K_j^2)^{-1},\\
\\
{\rm Im}\,\mathcal{I}=-\mu\sum_{j=1}^n C_j(\mu^2 I+K_j^2)^{-1}
\end{array}
\right.
\end{equation}
where $K_j$ is given as $K_j=\sigma I+\eta_j^{-1}C_j$. Here, note that by \eqref{cond-sigma}, $K_j$ is positive definite.
\end{comment}
That is,
\begin{equation}\label{split form of B}
\begin{array}{rl}
B=&B_1+iB_2,\\
\\
B_1(v,w)=&(\mathfrak{T}\{K(\mu^2\breve{I}+K^2)^{-1}\breve{C}(e[v]\breve{I})\},e[w])+\sigma(v,w)_\rho,\\
\\
B_2(v,w)=&-\mu (\mathfrak{T}\{(\mu^2\breve{I}+K^2)^{-1}\breve{C}(e[v]\breve{I}),e[w])+\mu (v,w)_\rho.
\end{array}
\end{equation}
Here, $B_1$ and $B_2$ are both continuous symmetric bilinear forms on $K(\Omega)$.

Using that
\begin{equation}\label{coeff of e[v]}
\mathfrak{T}\{K(\mu^2\breve{I}+K^2)^{-1}\breve{C}(e[v]\breve{I})\}=\sum_{j=0}^n C_j K_j(\mu^2 I+K_j^2)^{-1}e[v],
\end{equation}
where each $K_j$ is given by $K_j=\sigma I+\eta_j^{-1} C_j$, and \eqref{cond-sigma}, it follows that $B_1$ is coercive for such $\sigma$ due to the Korn inequality. Hence, the above variational problem is uniquely solvable by the usual argument \cite[Section 9, Chapter 3]{Mizo}. 
\begin{comment}
Now, consider the case that the EMM system has $\eta_j^{-1}C_j=0$ for some $j$. In this case, note that we don't need to use $(\lambda I+\eta_j^{-1}C_j)^{-1}$ to solve for $\psi_j$ from the second equation of \eqref{6.5} to get \eqref{r-eq1}. Then, due to the fact \eqref{coeff of e[v]}, it is almost clear that $B_1$ can be coercive under some condition for $C_j$'s. 
\end{comment}

\subsection*{Generation of contractive {$C_0$-semigroup for $L$}}

We begin with the following

\begin{lemma}\label{lem6.1}
Let $L$ be given by (\ref{6.2}). The following holds true,
\begin{equation}\label{6.8}
\|(\lambda I-L)
\begin{pmatrix}
v                \\
\psi
\end{pmatrix}
\|_H\geq \lambda\|
\begin{pmatrix}
v                \\
\psi
\end{pmatrix}
\|_H,\quad \lambda>0,\,\,(v,\psi)^{\it t}\in D(L).
\end{equation}
\end{lemma}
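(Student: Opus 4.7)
The plan is to prove dissipativity of $L$ in the form ${\rm Re}\,(LV,V)_H \le 0$ for every $V=(v,\psi)^t \in D(L)$. Once this is in hand, \eqref{6.8} follows immediately from the identity
$\|(\lambda I-L)V\|_H^2 = \lambda^2\|V\|_H^2 - 2\lambda\,{\rm Re}\,(LV,V)_H + \|LV\|_H^2 \ge \lambda^2\|V\|_H^2$,
valid for real $\lambda>0$, by taking square roots.

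For the dissipativity estimate, I would expand $(LV,V)_H$ componentwise using \eqref{6.2} and \eqref{H-inner product}. The first component, after $\rho$ cancels, yields $\int_\Omega {\rm div}\,\mathfrak{T}\{\breve{C}\psi\}\cdot\bar v\, dx$; integration by parts---using the Dirichlet condition $v|_{\Gamma_D}=0$ from $K(\Omega)$, the natural condition $(\mathfrak{T}\{\breve{C}\psi\})\nu|_{\Gamma_N}=0$ encoded in $D(L)$, and the full symmetry of $\mathfrak{T}\{\breve{C}\psi\}$ from Assumption~\ref{assumption}(ii) to replace $\nabla v$ by $e[v]$---turns it into $-(\mathfrak{T}\{\breve{C}\psi\},e[v])$. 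The second component, after unfolding $(\cdot,\cdot)_{\breve{C}}=(\breve{C}\cdot,\cdot)$ and using that $\breve{\eta}$ acts as a blockwise scalar and hence commutes with $\breve{C}$, contributes $(e[v],\mathfrak{T}\{\breve{C}\psi\}) - \int_\Omega\breve{\eta}^{-1}|\breve{C}\psi|^2\,dx$. The two cross terms are conjugates of each other up to sign, so their sum is purely imaginary; extracting the real part leaves ${\rm Re}\,(LV,V)_H = -\int_\Omega\breve{\eta}^{-1}|\breve{C}\psi|^2\,dx \le 0$, nonnegativity of the integrand coming from Assumption~\ref{assumption}(iv).

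The main subtle point I anticipate is justifying the vanishing of the boundary integral on $\Gamma_N$. The $\Gamma_D$ piece is trivial since $v\in K(\Omega)$, but on $\Gamma_N$ the calculation requires $(\mathfrak{T}\{\breve{C}\psi\})\nu=0$ in a suitable trace sense for every $(v,\psi)\in D(L)$. This condition is not imposed explicitly but is implicit in the domain definition: the requirement $L(v,\psi)\in L^2_\rho(\Omega)\times L^2(\Omega)$ together with the intended role of $L$ as the operator for the boundary value problem \eqref{6.1} forces this trace condition on $\psi$. Once this is properly established---for instance by interpreting the divergence in a suitable distributional sense and invoking a Green identity for the elasticity operator---the remainder of the argument is the algebraic manipulation outlined above.
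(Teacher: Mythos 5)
Your proposal is correct and follows essentially the same route as the paper: the paper likewise shows $(LV,V)_H+(V,LV)_H=-2(\breve{\eta}^{-1}\breve{C}\psi,\psi)_{\breve{C}}\le 0$ by integrating the divergence term by parts against $v$ (the cross terms cancelling by symmetry of $\breve{C}$), and then expands $\|(\lambda I-L)V\|_H^2=\lambda^2\|V\|_H^2+\|LV\|_H^2-\lambda\bigl[(LV,V)_H+(V,LV)_H\bigr]\ge\lambda^2\|V\|_H^2$. Your remark about the Neumann trace condition being implicit in $D(L)$ is a fair point of care that the paper passes over silently, but it does not change the argument.
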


\begin{proof}
For $(v,\psi)^{\it t}\in D(L)$, we have
\begin{equation}\label{6.9}
\begin{aligned}
(L
\begin{pmatrix}
v                \\
\psi
\end{pmatrix},
\begin{pmatrix}
v                \\
\psi
\end{pmatrix})_H&=
(\rho^{-1}{\rm div}\mathfrak{T}\{\breve{C}\psi\}, v)_\rho+(-\breve{\eta}^{-1}\breve{C}\psi+e[v]\breve{I},\psi)_{\breve{C}}\\
&=-(\psi,\breve{C}(e[v]\breve{I}))-(\breve{\eta}^{-1}\breve{C}\psi,\psi)_{\breve{C}}+(e[v]\breve{I},\psi)_{\breve{C}},
\end{aligned}
\end{equation}
\begin{equation}\label{6.10}
\begin{aligned}
(
\begin{pmatrix}
v                \\
\psi
\end{pmatrix},L
\begin{pmatrix}
v                \\
\psi
\end{pmatrix})_H&=
(v,\rho^{-1}{\rm div}\mathfrak{T}\{\breve{C}\psi\})_\rho+(\psi,-\breve{\eta}^{-1}\breve{C}\psi+e[v]\breve{I})_{\breve{C}}\\
&=-(\breve{C}(e[v]\breve{I}),\psi)-(\breve{\eta}^{-1}\breve{C}\psi,\psi)_{\breve{C}}+(\psi, e[v]\breve{I})_{\breve{C}}.
\end{aligned}
\end{equation}
Combining \eqref{6.9} and \eqref{6.10}, we find that 
\begin{equation}\label{6.11}
\begin{aligned}
(L
\begin{pmatrix}
v                \\
\psi
\end{pmatrix},
\begin{pmatrix}
v                \\
\psi
\end{pmatrix})_H+(
\begin{pmatrix}
v                \\
\psi
\end{pmatrix},L
\begin{pmatrix}
v                \\
\psi
\end{pmatrix})_H=
-2(\breve{\eta}^{-1}\breve{C}\psi,\psi)_{\breve{C}}\leq 0.
\end{aligned}
\end{equation}
Hence, for $\lambda>0$, $(v,\psi)^{\it t}\in D(L)$,
\begin{equation}\label{6.12}
\begin{aligned}
\|(\lambda I&-L)
\begin{pmatrix}
v                \\
\psi
\end{pmatrix}
\|_H^2\\
=& \lambda^2\|
\begin{pmatrix}
v                \\
\psi
\end{pmatrix}
\|_H^2+\|L
\begin{pmatrix}
v                \\
\psi
\end{pmatrix}\|_H^2-
\lambda(L
\begin{pmatrix}
v                \\
\psi
\end{pmatrix},
\begin{pmatrix}
v                \\
\psi
\end{pmatrix})_H-\lambda(
\begin{pmatrix}
v                \\
\psi
\end{pmatrix},L
\begin{pmatrix}
v                \\
\psi
\end{pmatrix})_H\\
\geq & \lambda^2\|
\begin{pmatrix}
v                \\
\psi
\end{pmatrix}
\|_H^2.
\end{aligned}
\end{equation}
Thus we have proved estimate \eqref{6.8}.
\end{proof}

Using the aforementioned solvability of the variational problem, we get the bijectivity of the map 
$\lambda I-L: D(L)\rightarrow H$ for $\lambda$ with ${\rm Re}\lambda>0$ and, hence, the resolvent $(\lambda I-L)^{-1}$ satisfies 
\begin{equation}\label{resolvent est for L 1}
\Vert (\lambda I-L)^{-1}\Vert\le\lambda^{-1},\,\,\lambda>0,
\end{equation}
where $\Vert\cdot\Vert$ is the operator norm on $H$. Furthermore, for the resolvent set $\rho(L)$ of $L$, we have the inclusion  
\begin{equation}\label{resolvent set of L}
\{\sigma\ge-\delta_0\}\subset\rho(L)\,\,\text{for some $\delta_0>0$}.    
\end{equation} 
With these results and observation we formulate

\begin{theorem}\label{semigroup for L}
$L$ generates a $C_0$-semigroup, $e^{tL}$, of contraction on $H$ and the imaginary axis is in $\rho(L)$.
\end{theorem}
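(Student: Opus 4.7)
The plan is to apply the Lumer--Phillips characterization of generators of contraction semigroups, which requires three ingredients: density of $D(L)$ in $H$, dissipativity of $L$, and surjectivity of $\lambda I - L$ for at least one $\lambda > 0$. Density $\overline{D(L)} = H$ has already been noted before \eqref{abstract CP}, so the remaining two conditions are the substance.

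Dissipativity would follow immediately from the identity \eqref{6.11}, which, after taking real parts, gives
\begin{equation*}
2\,\mathrm{Re}\,\bigl(L(v,\psi)^{\it t},(v,\psi)^{\it t}\bigr)_H = -2(\breve{\eta}^{-1}\breve{C}\psi,\psi)_{\breve{C}}\le 0,
\end{equation*}
since $\breve{\eta}^{-1} > 0$ and $(\cdot,\cdot)_{\breve{C}}$ is an inner product on $L^2(\Omega)$. For surjectivity, I would invoke the unique solvability of the variational problem \eqref{variational prob} together with Lemma \ref{lem6.1}: the bilinear form $B = B_1 + iB_2$ from \eqref{split form of B} has coercive real part $B_1$ by Korn's inequality combined with the positivity \eqref{cond-sigma} of each $K_j = \sigma I + \eta_j^{-1}C_j$, so the complex Lax--Milgram theorem (noting that $|B(v,v)|\ge B_1(v,v)\ge c\|v\|^2_{K(\Omega)}$) yields a unique solution for every $\mathrm{Re}\,\lambda\ge -\delta_0$. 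In particular, for $\lambda>0$ this delivers surjectivity of $\lambda I - L : D(L) \to H$, and Lemma \ref{lem6.1} gives the contractive resolvent bound \eqref{resolvent est for L 1}. Lumer--Phillips then produces the $C_0$-semigroup of contractions.

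The second claim, $i\mathbb{R}\subset\rho(L)$, is contained in \eqref{resolvent set of L}, so it suffices to verify that claim on the imaginary axis. The step I would be most careful about is that at $\lambda = i\mu$ the real part $\sigma$ vanishes, so the $\sigma(v,v)_\rho$ term in \eqref{split form of B} contributes nothing to coercivity; coercivity of $B_1$ must then come entirely from the elastic term $(\mathfrak{T}\{K(\mu^2\breve{I}+K^2)^{-1}\breve{C}(e[v]\breve{I})\},e[v])$. Using \eqref{coeff of e[v]}, this coefficient is $\sum_{j=1}^n C_j K_j(\mu^2 I + K_j^2)^{-1}$, which at $\sigma=0$ reduces to a positive symmetric rank-$4$ tensor uniformly bounded below on symmetric rank-$2$ tensors because each $K_j=\eta_j^{-1}C_j\ge\delta$. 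Combined with Korn's inequality on $K(\Omega)$, this gives coercivity of $B_1$ at $\sigma=0$, and hence bijectivity of $i\mu I - L$ for every $\mu\in\mathbb{R}$. This is the only genuinely technical point; the rest of the proof assembles routinely from the groundwork already laid in Section 6.
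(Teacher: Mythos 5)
Your proposal is correct and follows essentially the same route as the paper: the dissipativity identity \eqref{6.11} (the content of Lemma \ref{lem6.1}) combined with unique solvability of the variational problem \eqref{variational prob}, whose coercivity of $B_1$ rests on \eqref{cond-sigma} and Korn's inequality and already covers the half-plane $\{\sigma\ge-\delta_0\}$ of \eqref{resolvent set of L}, hence the imaginary axis. Invoking Lumer--Phillips rather than the resolvent bound \eqref{resolvent est for L 1} together with the standard generation theorem is only a cosmetic difference, and your explicit coercivity check at $\sigma=0$ is precisely the paper's argument specialized to $\lambda=i\mu$.
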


\begin{remark}
It is possible to improve estimate \eqref{6.8}. In fact, we have
\begin{equation}\label{6.23}
\|(\lambda I-L)^{-1}
\|\le (\lambda^2+\varepsilon_2)^{-1/2},\quad \lambda>\beta
\end{equation}
for some $\varepsilon_2>0$ and $\beta <0$.
\end{remark}

\begin{proof}
{\rm Recalling \eqref{6.11}, we modify estimate \eqref{6.12} as follows.
%\begin{equation}\label{7.2}
%\begin{aligned}
%(L
%\begin{pmatrix}
%v                \\
%\psi
%\end{pmatrix},
%\begin{pmatrix}
%v                \\
%\psi
%\end{pmatrix})_H&=
%(\rho^{-1}{\rm div}\mathfrak{T}\{\breve{C}\psi\}, v)_\rho+(-%\breve{\eta}^{-1}\breve{C}\psi+e[v]\breve{I},\psi)_{{\breve{C}}%}\\
%&=-(\psi,\breve{C}(e[v]\breve{I}))-%(\breve{\eta}^{-1}\breve{C}\psi,\psi)_{\breve{C}}+(e[v]\breve{I},\psi)_{\breve{C}}.
%\end{aligned}
%\end{equation}
%\begin{equation}\label{7.3}
%\begin{aligned}
%(
%\begin{pmatrix}
%v                \\
%\psi
%\end{pmatrix},L
%\begin{pmatrix}
%v                \\
%\psi
%\end{pmatrix})_H&=
%(v,\rho^{-1}{\rm div}\mathfrak{T}\{\breve{C}\psi\})_\rho+(\psi,-\breve{\eta}^{-1}\breve{C}\psi+e[v]\breve{I})_{\breve{C}}\\
%&=-(\breve{C}(e[v]\breve{I}),\psi)-(\breve{\eta}^{-1}\breve{C}\psi,\psi)_{\breve{C}}+(\psi, e[v]\breve{I})_{\breve{C}}.
%\end{aligned}
%\end{equation}
%Hereafter we assume that $(v,\psi)^{\it t}\in D(L)$ in this proof. Combining \eqref{7.2} and \eqref{7.3}, we have
%\begin{equation}\label{7.4}
%\begin{aligned}
%-(L
%\begin{pmatrix}
%v                \\
%\psi
%\end{pmatrix},
%\begin{pmatrix}
%v                \\
%\psi
%\end{pmatrix})_H-(
%\begin{pmatrix}
%v                \\
%\psi
%\end{pmatrix},L
%\begin{pmatrix}
%v                \\
%\psi
%\end{pmatrix})_H=
%2(\breve{\eta}^{-1}\breve{C}\psi,\psi)_{\breve{C}}\geq 0.
%\end{aligned}
%\end{equation}
We consider 
\begin{equation}\label{LV=G}
L
\begin{pmatrix}
v                \\
\psi
\end{pmatrix}
=
\begin{pmatrix}
g_1               \\
g_2
\end{pmatrix}
,
\end{equation}
which is equivalent to
\begin{equation}\label{componentwise LV=G}
\left\{
\begin{aligned}
\rho^{-1}{\rm div}\mathfrak{T}\{\breve{C}\psi\}=g_1,\\
-\breve{\eta}^{-1}\breve{C}\psi+e[v]\breve{I}=g_2,
\end{aligned}
\right.
\end{equation}
where $g_1\in L_\rho^2(\Omega)$ and $g_2\in L^2(\Omega)$. Then, from \eqref{componentwise LV=G}, we have 
\begin{equation*}
\begin{aligned}
(\rho g_1,v)=-(\mathfrak{T}\{\breve{C}\psi\},e[v])=(\mathfrak{T}(-\breve{\eta}e[v]\breve{I}+\breve{\eta}g_2),e[v]),
\end{aligned}
\end{equation*}
which is nothing but 
\begin{equation*}
\begin{aligned}
-(|\eta|e[v],e[v])=-(\mathfrak{T}(\breve{\eta}g_2),e[v])+(\rho g_1,v).
\end{aligned}
\end{equation*}
Hence, we have
\begin{equation}\label{6.26}
\begin{aligned}
\varepsilon_1(\|v\|+\|\nabla v\|)\leq \|g_1\|_{\rho} +\|g_2\|
\end{aligned}
\end{equation}
for some $\varepsilon_1>0$, where $\Vert\cdot\Vert$ denotes the $L^2(\Omega)$ norm.
From this estimate and the second equation of \eqref{componentwise LV=G}, we conclude that
\begin{equation}\label{6.27}
\begin{aligned}
2\varepsilon_2\Vert (v,\psi)^{\it t}\Vert_H^2\leq\Vert(g_1,g_2)^{\it t}\Vert_H^2
\end{aligned}
\end{equation}
for some $\varepsilon_2>0$.
Hence, we have obtained
\begin{equation}\label{7.11}
\|L
\begin{pmatrix}
v                \\
\psi
\end{pmatrix}
\|_H^2\geq 2\varepsilon_2 \|
\begin{pmatrix}
v               \\
\psi
\end{pmatrix}\|_H^2
.
\end{equation}

Now, we choose a constant $\beta<0$ such that $2\lambda(\breve{\eta}^{-1}\breve{C}\psi,\psi)+ \varepsilon_2\|\psi\|^2\geq 0$ 
for $\lambda>\beta$.
Then, using \eqref{6.11},
we have for $\lambda>\beta$ that
\begin{equation}\label{7.12}
\begin{aligned}
\|(\lambda I-&L)
\begin{pmatrix}
v                \\
\psi
\end{pmatrix}
\|_H^2\\
=& \lambda^2\|
\begin{pmatrix}
v                \\
\psi
\end{pmatrix}
\|_H^2+\|L
\begin{pmatrix}
v                \\
\psi
\end{pmatrix}\|_H^2-
\lambda(L
\begin{pmatrix}
v                \\
\psi
\end{pmatrix},
\begin{pmatrix}
v                \\
\psi
\end{pmatrix})_H-\lambda(
\begin{pmatrix}
v                \\
\psi
\end{pmatrix},L
\begin{pmatrix}
v                \\
\psi
\end{pmatrix})_H\\
\geq & (\lambda^2+\varepsilon_2)\|
\begin{pmatrix}
v                \\
\psi
\end{pmatrix}
\|_H^2.
\end{aligned}
\end{equation}
We have proven estimate \eqref{6.23}.}
\end{proof}

Next, we give an estimate of $(\lambda I-L)^{-1}$ for negative $\lambda$ in
\begin{lemma}\label{lem6.4}
There exists a positive constant $l$ such that
\begin{equation}\label{6.33}
\|(\lambda I-L)
\begin{pmatrix}
v                \\
\psi
\end{pmatrix}
\|_H\geq (-\lambda-l)\|
\begin{pmatrix}
v                \\
\psi
\end{pmatrix}
\|_H,\quad -\lambda>l,\,\,(v,\psi)^{\it t}\in D(L).
\end{equation}
\end{lemma}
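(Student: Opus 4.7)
My plan is to mirror the proof of Lemma~\ref{lem6.1}, adapting its squared-norm calculation to accommodate the sign change when $\lambda<0$. The identity \eqref{6.11},
\[
(LV,V)_H + (V,LV)_H = -2(\breve{\eta}^{-1}\breve{C}\psi,\psi)_{\breve{C}},
\]
holds for all $V = (v,\psi)^{\it t} \in D(L)$ irrespective of the sign of $\lambda$. Expanding as in \eqref{6.12} therefore yields
\[
\|(\lambda I - L)V\|_H^2 = \lambda^2\|V\|_H^2 + 2\lambda(\breve{\eta}^{-1}\breve{C}\psi,\psi)_{\breve{C}} + \|LV\|_H^2.
\]
For $\lambda<0$ the middle term is non-positive, so the task becomes to control it from below in terms of $\|V\|_H^2$.

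For this I would establish the existence of a constant $l_0>0$, depending only on $\beta_0$ and $\|\breve{C}\|_{L^\infty}$, such that
\[
(\breve{\eta}^{-1}\breve{C}\psi,\psi)_{\breve{C}} \leq l_0\|\psi\|_{\breve{C}}^2 \leq l_0\|V\|_H^2.
\]
Since the block-diagonal operators $\breve{\eta}^{-1}$ and $\breve{C}$ commute (each $\eta_j^{-1}$ is scalar on its block), one has $(\breve{\eta}^{-1}\breve{C}\psi,\psi)_{\breve{C}} = (\breve{\eta}^{-1}\breve{C}^2\psi,\psi)$; combining the pointwise bounds $\eta_j^{-1} \leq \beta_0^{-1}$ and $\breve{C}^2 \leq \|\breve{C}\|_{L^\infty}\breve{C}$ (the latter valid pointwise by symmetry and $L^\infty$-boundedness of $\breve{C}$) then delivers the claim.

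Dropping the non-negative term $\|LV\|_H^2$ and combining, I would arrive at
\[
\|(\lambda I - L)V\|_H^2 \geq \bigl(\lambda^2 - 2|\lambda|l_0\bigr)\|V\|_H^2 = |\lambda|\bigl(|\lambda|-2l_0\bigr)\|V\|_H^2.
\]
Setting $l := 2l_0$, for any $-\lambda>l$ this gives the desired $\|(\lambda I - L)V\|_H \geq (-\lambda-l)\|V\|_H$ via the elementary inequality $|\lambda|(|\lambda|-l) \geq (|\lambda|-l)^2$, valid for $l\geq 0$. The only obstacle is the bookkeeping step of identifying $l_0$ explicitly from the $L^\infty$ bounds on $C_j$ and $\eta_j^{-1}$ supplied by Assumption~\ref{assumption}; once that is in place, the remainder of the argument is a direct adaptation of \eqref{6.12}.
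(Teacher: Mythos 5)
Your proposal is correct and follows essentially the same route as the paper: both expand $\|(\lambda I-L)V\|_H^2$ via the identity \eqref{6.11}, bound the dissipative term from below by $-2|\lambda|M\|V\|_H^2$ for a suitable constant $M$, and finish with an elementary algebraic estimate yielding the factor $(-\lambda-l)^2$ with $l$ equal to twice that constant. Your explicit identification of the constant from the bounds in Assumption~\ref{assumption} is a minor refinement of a step the paper leaves implicit.
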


\begin{proof}
From \eqref{6.11} and \eqref{6.12}, we have that if $\lambda \leq -l_2$ for some positive constant $l_2$, then
\begin{equation}\label{6.34}
\begin{aligned}
\|(\lambda I-&L)
\begin{pmatrix}
v                \\
\psi
\end{pmatrix}
\|_H^2\\
=& \lambda^2\|
\begin{pmatrix}
v                \\
\psi
\end{pmatrix}
\|_H^2+\|L
\begin{pmatrix}
v                \\
\psi
\end{pmatrix}\|_H^2+2(\breve{\eta}^{-1}\breve{C}\psi,\psi)_{\breve{C}}\\
\geq & (\lambda^2-2|\lambda|M)\|
\begin{pmatrix}
v                \\
\psi
\end{pmatrix}
\|_H^2,
\end{aligned}
\end{equation}
where $M>l_2$ is a positive constant.
Because
\begin{equation}\label{6.35}
\begin{aligned}
\lambda^2-2|\lambda|M&=\lambda^2-4|\lambda|M+4M^2+2|\lambda|M-4M^2\\
&=(|\lambda|-2M)^2+2M(|\lambda|-2M),
\end{aligned}
\end{equation}
we find that for $-\lambda>l$
\begin{equation}\label{6.36}
\begin{aligned}
\|(\lambda I-L)
\begin{pmatrix}
v                \\
\psi
\end{pmatrix}
\|_H^2
\geq (|\lambda|-l)^2\|
\begin{pmatrix}
v                \\
\psi
\end{pmatrix}
\|_H^2,
\end{aligned}
\end{equation}
where $l=2M$.
\end{proof}

Once again, from the aforementioned solvability of the variational problem, we have the following resolvent estimate 
\begin{equation}\label{resolvent estimate L 2}
\Vert (\lambda I-L)^{-1}\Vert\le(|\lambda|-l)^{-1},\,\,-\lambda>l,
\end{equation}
where $\Vert\cdot\Vert$ is the operator norm on $H$.

\medskip
Therefore, combining the two resolvent estimates \eqref{resolvent est for L 1} and \eqref{resolvent estimate L 2}, we arrive at the following

\begin{theorem}\label{generation of group for L}
$L$ generates a $C_0$-group on $H$.    
\end{theorem}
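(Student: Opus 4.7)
The plan is to apply Theorem \ref{cor5.2} directly, since all of the heavy lifting has already been accomplished in the two preceding resolvent estimates. To invoke that theorem, I need three ingredients: $L$ is closed, $\overline{D(L)}=H$, and there is a single $\beta\ge 0$ such that for all real $\lambda$ with $|\lambda|>\beta$ the resolvent $(\lambda I-L)^{-1}$ exists and satisfies $\|(\lambda I-L)^{-1}\|\le (|\lambda|-\beta)^{-1}$. The first two properties are already in hand: density of the domain was asserted just after \eqref{H-inner product}, and closedness is inherited from the fact, established in Theorem \ref{semigroup for L}, that $L$ generates a $C_0$-semigroup of contraction on $H$.

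For the resolvent bound, I would take $\beta:=l$ with $l$ the constant produced in Lemma \ref{lem6.4}, which we may assume satisfies $l>0$. For $\lambda>l$, estimate \eqref{resolvent est for L 1} yields
\[
\|(\lambda I-L)^{-1}\|\le \lambda^{-1}\le (\lambda-l)^{-1}=(|\lambda|-\beta)^{-1},
\]
while for $\lambda<-l$, estimate \eqref{resolvent estimate L 2} yields exactly
\[
\|(\lambda I-L)^{-1}\|\le (|\lambda|-l)^{-1}=(|\lambda|-\beta)^{-1}.
\]
Thus for every real $\lambda$ with $|\lambda|>\beta$, both sides of the real axis belong to $\rho(L)$ and the uniform bound required in \eqref{5.3} holds.

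With these three ingredients verified, Theorem \ref{cor5.2} applied with $\mathcal{A}=L$ and $X=H$ immediately delivers that $L$ generates a $C_0$-group on $H$, concluding the proof. I do not anticipate a genuine obstacle here: the essential analytic work is the coercivity of $B_1$ (proved via the Korn inequality) and the dissipativity computation \eqref{6.11}, which underwrote \eqref{resolvent est for L 1} and \eqref{resolvent estimate L 2}, respectively. The only small bookkeeping point is to ensure that the constant $\beta$ chosen for the combined estimate is compatible with both halves of the real axis, which is why taking $\beta=l$ (so that the semigroup side estimate $\lambda^{-1}\le(\lambda-\beta)^{-1}$ is automatically valid on $\lambda>\beta$) is the convenient choice. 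Once this is noted, the theorem follows as a one-line application of the Hille--Yosida type characterization recalled in Theorem \ref{cor5.2}.
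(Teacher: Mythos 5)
Your proposal is correct and follows essentially the same route as the paper: combining the two resolvent estimates \eqref{resolvent est for L 1} and \eqref{resolvent estimate L 2} (with the uniform choice $\beta=l$, using $\lambda^{-1}\le(\lambda-l)^{-1}$ for $\lambda>l$) and invoking the Hille--Yosida type group criterion of Theorem \ref{cor5.2}, with closedness and density of $D(L)$ supplied by the earlier semigroup result and the remark after \eqref{H-inner product}. Your explicit bookkeeping of these hypotheses merely spells out what the paper leaves implicit.
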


\paragraph{Abstract Cauchy problem} We obtain the unique solvability of the abstract Cauchy problem for the reduced system as we had obtained this for the full AD system. In Theorem~\ref{inhomog CP}, $V$ becomes $(v,\psi)^t$, $A_Z$ simply needs to be replaced by $L$ and $W_Z$ by $H$.

\section{Exponential decay property of solutions of the reduced system}

In this section, we prove that any solution $(v,\psi)$ of \eqref{6.1} whose initial data satisfying the compatibility condition of order 2 decays exponentially as $t\rightarrow\infty$.
To this end, we define the energy
\begin{equation}\label{energy norm}
E(v,\psi):=\frac{1}{2}\|v\|^2_{\rho}+\frac{1}{2}\sum_{j=1}^n (C_j\psi_j,\psi_j).
\end{equation} 

\begin{lemma}\label{K_lem7.1}
The energy defined in \eqref{energy norm} satisfies
\begin{equation}\label{K_7.5}
\begin{aligned}
\frac{d}{dt}E(v,\psi)=-\sum_{j=1}^n\eta_j\|e[v]-\partial_t\psi_j\|^2.
\end{aligned}
\end{equation}
\end{lemma}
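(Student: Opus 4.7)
The plan is to differentiate $E(v,\psi)$ term by term, substitute the two equations of the reduced system \eqref{6.1}, and use integration by parts together with the boundary conditions to produce the advertised dissipation expression. Throughout, I will assume that $(v,\psi)$ is a sufficiently smooth solution so that the formal computation is justified; this is the role of the compatibility hypothesis.

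First I would differentiate the kinetic part: $\frac{d}{dt}\bigl(\tfrac{1}{2}\|v\|_\rho^2\bigr)=(\partial_t v,v)_\rho=\int_\Omega \rho\,\partial_t v\cdot v\,dx=\int_\Omega v\cdot\mathrm{div}\,\mathfrak{T}\{\breve C\psi\}\,dx$ by the first equation of \eqref{6.1}. Integrating by parts and invoking the boundary conditions $v=0$ on $\Gamma_D$ and $(\mathfrak{T}\{\breve C\psi\})\nu=0$ on $\Gamma_N$ gives $-\int_\Omega \nabla v:\mathfrak{T}\{\breve C\psi\}\,dx$; the symmetry of $C_j\psi_j$ (from Assumption \ref{assumption}(ii)) then lets me replace $\nabla v$ by $e[v]$, so this term equals $-\sum_{j=1}^n (C_j\psi_j,e[v])$.

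Next I would differentiate the stored-energy part. Using the full symmetry of $C_j$, I get $\frac{d}{dt}\bigl(\tfrac12(C_j\psi_j,\psi_j)\bigr)=(C_j\psi_j,\partial_t\psi_j)$, so summing yields $\sum_{j=1}^n(C_j\psi_j,\partial_t\psi_j)$. Adding the two contributions gives
\begin{equation*}
\frac{d}{dt}E(v,\psi)=-\sum_{j=1}^n\bigl(C_j\psi_j,\,e[v]-\partial_t\psi_j\bigr).
\end{equation*}
The last step is to eliminate $C_j\psi_j$ using the second equation of \eqref{6.1}, which read componentwise says $\eta_j\partial_t\psi_j=-C_j\psi_j+\eta_j\,e[v]$, i.e.\ $C_j\psi_j=\eta_j(e[v]-\partial_t\psi_j)$. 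Substituting this identity into the previous display produces $-\sum_{j=1}^n \eta_j\|e[v]-\partial_t\psi_j\|^2$, which is exactly \eqref{K_7.5}.

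There is no real obstacle here; the computation is a direct energy identity. The only delicate points are to ensure (i) the integration by parts is valid, which follows once the initial data satisfies the compatibility condition of order $2$ so that $(v,\psi)\in C^1$ in time with values in the appropriate spaces, and (ii) that both boundary contributions vanish on the respective parts of $\partial\Omega$—this is why the mixed Dirichlet/Neumann splitting in \eqref{6.1} is precisely what is needed, with $v$ vanishing on $\Gamma_D$ killing the trace there and $(\mathfrak{T}\{\breve C\psi\})\nu=0$ on $\Gamma_N$ killing it on the complement.
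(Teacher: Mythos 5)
Your proposal is correct and follows essentially the same route as the paper: differentiate $E$, use the first equation of \eqref{6.1} with integration by parts and the mixed boundary conditions to get $-\sum_j(C_j\psi_j,e[v])$, and use the second equation to convert the result into the dissipation term. The only cosmetic difference is that you substitute $C_j\psi_j=\eta_j(e[v]-\partial_t\psi_j)$ once at the end, whereas the paper expands the same identity in two steps before completing the square; the content is identical.
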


\begin{proof}
The first equation of \eqref{6.1} implies that
\begin{equation}\label{K_7.6}
\begin{aligned}
(\rho\partial_tv,v)=({\rm div}\sum_{j=1}^n C_j\psi_j,v)=-\sum_{j=1}^n(C_j\psi_j,e[v]).
\end{aligned}
\end{equation}
The second equation of \eqref{6.1} gives
\begin{equation}\label{K_7.7}
\begin{aligned}
\eta_j(\partial_t\psi_j,\partial_t\psi_j)=(\eta_je[v]-C_j\psi,\partial_t\psi_j)=(\eta_je[v],\partial_t\psi_j)-(C_j\psi,\partial_t\psi_j).
\end{aligned}
\end{equation}
A straightforward computation, using \eqref{K_7.6} and \eqref{K_7.7}, yields
\begin{equation}\label{K_7.8}
\begin{aligned}
\frac{d}{dt}E(v,\psi)
=& \sum_{j=1}^n(C_j\psi,\partial_t\psi_j)+(\rho\partial_tv,v)\\
=&-\sum_{j=1}^n\eta_j\|\partial_t\psi_j\|^2+ \sum_{j=1}^n(\eta_je[v],\partial_t\psi_j)-\sum_{j=1}^n(C_j\psi_j,e[v])\\
=&-\sum_{j=1}^n\eta_j\|\partial_t\psi_j\|^2+ \sum_{j=1}^n2\eta_j(e[v],\partial_t\psi_j)-\sum_{j=1}^n\eta_j\|e[v]\|^2\\
=&-\sum_{j=1}^n\eta_j\|e[v]-\partial_t\psi_j\|^2,
\end{aligned}
\end{equation}
which is the statement of the lemma.
\end{proof}

We now differentiate equation \eqref{6.1} in $t$ to obtain the following system for $(\partial_t v,\partial_t \psi)$.
\begin{equation}\label{K_7.9}
\left\{
\begin{array}{ll}
\partial_t^2 v=\rho^{-1}{\rm div}\mathfrak{T}\{\breve{C}\partial_t \psi\},\\
\partial_t^2 \psi=-\breve{\eta}^{-1}\breve{C}\psi+e[\partial_t v]\breve{I},\\[0.25cm]
\partial_t v=0\quad\text{on}\quad\Gamma_D, \quad
(\mathfrak{T}\{\breve{C}\partial_t \psi\})\nu=0\quad\text{on}\quad \Gamma_N,\\[0.25cm]
(\partial_t v,\partial_t \psi)|_{t=0}\ \text{is obtained by using the first and the second equations of \eqref{6.1}.}
\end{array}
\right.
\end{equation}
The associated energy is
$$ E(\partial_tv,\partial_t\psi):=\frac{1}{2}\|\partial_tv\|^2_{\rho}+\frac{1}{2}\sum_{j=1}^n (C_i\partial_t\psi_j,\partial_t\psi_j),$$
which satisfies
\begin{equation}\label{K_7.10}
\frac{d}{dt}E(\partial_tv,\partial_t\psi)=-\sum_{j=1}^n\eta_j\|e[\partial_tv]-\partial^2_t\psi_j\|^2.
\end{equation}
in analogy to the statement in Lemma~\ref{K_lem7.1}. We then define a higher energy, $\bar E(v,\psi)$, as
\begin{equation}\label{higher energy}
    \bar E(v,\psi)=E(v,\psi)+E(\partial_t v,\partial_t \psi).
\end{equation}

For simplicity of notation, we write 
$$\Vert\tilde v\Vert^2_{\rho}:=\Vert v\Vert^2_{\rho}+\Vert\partial_t v\Vert^2_{\rho},\quad\|\tilde{\psi}\|^2=\|\psi\|^2+\|\partial_t\psi\|^2.$$
On the one hand, from the second equation of \eqref{6.1} and the Korn inequality, we have
\begin{equation}\label{K_7.14}
\begin{aligned}
a_1(\|\tilde v \|^2_{\rho}+\|\tilde{\psi}\|^2)\leq \bar{E}(v,\psi) \leq b_1(\|\tilde v\|^2_{\rho}+\|\tilde{\psi}\|^2)
\end{aligned}
\end{equation}
for some positive constants $a_1$, $b_1$ with $a_1<1<b_1$.
On the other hand, using \eqref{K_7.5} and \eqref{K_7.10}, we obtain from the second equation of \eqref{6.1},
\begin{equation}\label{K_7.15}
\begin{aligned}
\frac{d}{dt}\bar{E}(v,\psi)&=-\sum_{i=1}^n\eta_i\|e[v]-\partial_t\psi_i\|^2-\sum_{i=1}^n\eta_i\|e[\partial_tv]-\partial^2_t\psi_i\|^2\\
&\leq -a_2 \|\tilde{\psi}\|^2,
\end{aligned}
\end{equation}
for some positive constant $a_2<1$. Comparing \eqref{K_7.14} and \eqref{K_7.15}, we need to amend $\bar{E}(v,\psi)$ through adding a function $f_E$ so that $\frac{d}{dt}f_E$ has a contribution $-\|\partial_tv\|^2_{\color{blue}{\rho}}$. We define such an $f_E$ by 
$$ f_E=(\sum_{j=1}^n C_j\psi_j,e[v]).$$
Using \eqref{6.1}, a direct computation yields
\begin{equation}\label{7.16}
\begin{aligned}
\frac{d}{dt}f_E&=(\sum_{j=1}^n C_j\partial_t\psi_j,e[v])+(\sum_{j=1}^n C_j\psi_j,e[\partial_tv])\\
&=(\sum_{j=1}^n C_j\partial_t\psi_j,e[v])-({\rm div}\sum_{j=1}^n C_j\psi_j,\partial_tv)\\[0.25cm]
&\leq b_2 \|\tilde{\psi}\|^2-\|\partial_tv\|^2_{\rho}
\end{aligned}
\end{equation}
for some positive constant $b_2>1$. Using the second equation of \eqref{6.1}, the Korn inequality and \eqref{K_7.14}, we get
\begin{equation}\label{K_7.17}
\begin{aligned}
|f_E|\leq b_3\|\tilde{\psi}\|^2\leq \frac{b_3}{a_1}\bar{E}(v,\psi)
\end{aligned}
\end{equation}
for some positive constant $b_3>1$. Based on these estimates, we define the amended energy as 
\begin{equation}\label{amended energy}
\tilde{E}(v,\psi):=\bar{E}(v,\psi)+\frac{a_1a_2}{2b_2b_3}f_E.
\end{equation}
Here, we note that $\{a_1,a_2\}$ and $\{b_1,b_2,b_3\}$ can be taken arbitrarily small and large, respectively. Hence, $\tilde{E}(v,\psi)$ is equivalent in the sense of norms to $\Vert\tilde v\Vert^2+\Vert\tilde\psi\Vert^2$ 

Combining \eqref{K_7.14} to \eqref{K_7.17}, leads to the estimate for the time derivative,
\begin{equation}\label{K_7.18}
\begin{aligned}
\frac{d}{dt}\tilde{E}(v,\psi)
&\leq-a_2 \|\tilde{\psi}\|^2+\frac{a_1a_2}{2b_3} \|\tilde{\psi}\|^2-\frac{a_1a_2}{2b_2b_3}\|\partial_tv\|^2_{\rho}\\
&\leq -\frac{a_1a_2}{2b_2b_3}(\|\partial_tv\|^2_{\rho}+\|\tilde{\psi}\|^2)\\
&\leq -\frac{a_1a_2}{2b_1b_2b_3}\bar{E}(v,\psi)\\
&\leq -a_4\tilde{E}(v,\psi),
\end{aligned}
\end{equation}
where $a_4=(a_1a_2)/(3b_1b_2b_3)$.  Equation \eqref{K_7.18} implies the following exponential decay of solution $(v,\psi)$ of \eqref{6.1},
\begin{equation}\label{K_7.19}
\begin{aligned}
a_1(\|\tilde v\|^2_{\rho}+\|\tilde\psi\|^2)\leq\bar{E}(v,\psi)\leq 2\tilde{E}(v,\psi)\leq 2\tilde{E}(v(0),\psi(0))e^{-a_4t}.
\end{aligned}
\end{equation}
This proves
 
%\begin{remark}\label{energy}
%The energy in \cite{de Hoop} contains $\|\nabla u\|$, $\|\partial_t u\|$ etc, however it does not contain $\phi$ or $\psi$. So, it can not be applied to prove the  exponential decay of solutions for the reduced system. On the other hand, the energy $\bar{E}(v,\psi)$ does not contain $\|\nabla u\|$ or $\| u\|$, so it can not be applied to prove the  exponential decay of solutions for the EMM system or VID system.
%\end{remark}

\begin{theorem}\label{exponential decay}
Let $(v(t),\psi(t))^t \in C^2([0,\infty); H^1(\Omega))\times C^2(([0,\infty); L^2(\Omega))$ be the solution of \eqref{6.1} satisying the compatibility condition of order 2. Then, there exists a constant $a_4>0$ independent of the initial values such that $(v(t),\psi(t))^t$ is exponentially decaying of order $O(e^{-a_4 t})$ as $t\rightarrow\infty$ with respect to the square root of the higher energy \eqref{higher energy}. 
\end{theorem}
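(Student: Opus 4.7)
The approach is a Lyapunov-type energy method, where the challenge is that the natural dissipation in \eqref{6.1} only controls a combination involving the strain-like variable $\psi$ (through $e[v]-\partial_t\psi_j$) and not the velocity $v$ directly. I would proceed in three steps: build the basic energy identity, differentiate in $t$ to form a higher energy, then add a carefully chosen cross-term that recovers control on $\|\partial_t v\|_\rho^2$.

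First, with $E(v,\psi) := \tfrac12\|v\|_\rho^2 + \tfrac12\sum_j(C_j\psi_j,\psi_j)$, testing the first equation of \eqref{6.1} against $v$ and the $j$-th block of the second against $C_j\psi_j$, the boundary conditions $v|_{\Gamma_D}=0$ and $(\mathfrak{T}\{\breve C\psi\})\nu|_{\Gamma_N}=0$ kill the boundary terms after integration by parts, yielding $\tfrac{d}{dt}E = -\sum_j\eta_j\|e[v]-\partial_t\psi_j\|^2$. Because the system is linear with time-independent coefficients, the same identity applies to $(\partial_t v,\partial_t\psi)$, and adding the two gives $\tfrac{d}{dt}\bar E \leq -a_2\|\tilde\psi\|^2$ for the higher energy $\bar E := E(v,\psi)+E(\partial_t v,\partial_t\psi)$ with $\tilde\psi := (\psi,\partial_t\psi)$. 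Using the second equation of \eqref{6.1} to express $e[v]$ in terms of $\psi$ and $\partial_t\psi$, combined with Korn's inequality, produces the equivalence $\bar E \sim \|\tilde v\|_\rho^2+\|\tilde\psi\|^2$.

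The gap is that $\|\partial_t v\|_\rho^2$ does not appear in the dissipation. The creative step, which I expect to be the main obstacle, is to identify a functional whose time derivative contributes a negative multiple of $\|\partial_t v\|_\rho^2$. The natural candidate is the cross-term $f_E := (\sum_j C_j\psi_j,\,e[v])$: its time derivative, upon substituting $\rho\partial_t v = {\rm div}\,\mathfrak{T}\{\breve C\psi\}$ from the first equation of \eqref{6.1} and integrating by parts, yields $\tfrac{d}{dt}f_E \leq b_2\|\tilde\psi\|^2 - \|\partial_t v\|_\rho^2$, while Cauchy--Schwarz gives $|f_E|\leq b_3\|\tilde\psi\|^2$. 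Defining the amended Lyapunov functional $\tilde E := \bar E + \varepsilon f_E$ with $\varepsilon$ chosen small enough that $\tilde E$ remains equivalent to $\bar E$ and the new $-\varepsilon\|\partial_t v\|_\rho^2$ contribution dominates the $+\varepsilon b_2\|\tilde\psi\|^2$ perturbation, one arrives at the differential inequality $\tfrac{d}{dt}\tilde E \leq -a_4 \tilde E$. Gronwall's inequality then gives $\tilde E(t) \leq \tilde E(0)e^{-a_4 t}$, and unwinding the norm equivalences produces the claimed exponential decay. The compatibility condition of order $2$ in the hypothesis is exactly what is needed to justify differentiating \eqref{6.1} in $t$ so that $(\partial_t v,\partial_t\psi)\in D(L)$ and the second copy of the energy identity (together with the integration by parts in $\tfrac{d}{dt}f_E$) is rigorous.
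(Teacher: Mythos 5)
Your proposal is correct and follows essentially the same argument as the paper: the basic energy identity, the higher energy obtained by differentiating the system in $t$, the cross-term $f_E=(\sum_j C_j\psi_j,e[v])$ whose derivative supplies the missing $-\|\partial_t v\|_\rho^2$, the amended Lyapunov functional with a small multiple of $f_E$, and Gronwall. The only cosmetic difference is that the paper bounds $|f_E|$ and the term $(\sum_j C_j\partial_t\psi_j,e[v])$ by invoking the second equation of \eqref{6.1} together with Korn's inequality (not Cauchy--Schwarz alone), which you implicitly use anyway in the norm equivalence.
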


\subsection*{Limiting Amplitude Principle}

In this section, we provide a proof of the limiting amplitude principle for the reduced system. To begin with, we consider the following initial boundary value problem for the reduced system:
\begin{equation}\label{lamp 1}
\left\{
\begin{array}{ll}
\partial_tv=\rho^{-1}{\rm div}\mathfrak{T}\{\breve{C}\psi\},\\
\partial_t\psi=-\breve{\eta}^{-1}\breve{C}\psi+e[v]\breve{I},\\[0.25cm]
v=e^{i\kappa t}\,\chi \tilde f\big|_{\Gamma_D}\quad\text{on}\quad\Gamma_D,\quad
(\mathfrak{T}\{\breve{C}\psi\})\nu=0 \quad \text{on}\quad \Gamma_N,\\[0.25cm]
(v,\psi)=(0,0)\quad\text{at $t=0$}.
\end{array}
\right.
\end{equation}
where $\kappa>0$ is a fixed angular frequency, $\tilde f\in H^1(\Omega)$, and $\chi=\chi(t)\in C^\infty([0,\infty))$ with the properties $\chi(t)=0$ near $t=0$, $\chi(t)=1,\,\,t\ge t_0$ for a fixed $t_0>0$. The input over $\Gamma_D$ is time harmonic after the time $t_0$. Here, note that the reason why we introduced $\chi$ is to make the boundary condition and the initial condition of \eqref{lamp 1} compatible.
 
In order to transform the boundary condition of \eqref{lamp 1} to a homogeneous one, we consider the following boundary value problem,
\begin{equation}\label{static lamp}
\left\{
\begin{array}{ll}
i\kappa\tilde v_0-\rho^{-1}{\rm div}\mathfrak{T}\{\breve{C}\tilde\psi_0\}=0,\\
i\kappa\tilde\psi_0+\breve\eta^{-1}\breve{C}\tilde\psi_0-e[\tilde v_0]\breve{I}=0,\\[0.25cm]
\tilde v_0=\tilde f\big|_{\Gamma_D}\quad\text{on
$\Gamma_D$},\quad(\mathfrak{T}\{\breve{C}\tilde\psi_0\})\nu=0\quad\text{on $\Gamma_N$}.
\end{array}
\right.
\end{equation}
The unique solvability of this problem can be shown as follows. Due to the fact $\tilde f\in H^1(\Omega)$, it can be reduced to that of the boundary value problem for \eqref{6.5} with $\lambda = i \kappa$, that is, by transforming the inhomogeneous boundary condition on $\Gamma_D$ to the homogeneous one. We use that \eqref{6.5} is uniquely solvable even in the case that $f$ is in the dual space of $K(\Omega)$ (cf.~\eqref{resolvent set of L}). Hence, there exists a unique solution $(\tilde v_0,\tilde\psi_0) \in H^1(\Omega)\times L^2(\Omega)$ of \eqref{static lamp}.

We define $(v_0,\psi_0)$ by 
\begin{equation}\label{v_0, psi_0}
(v_0,\psi_0)=\chi(t)(\tilde v_0,\tilde\psi_0)
\end{equation}
and seek a solution $(v,\psi)$ of \eqref{lamp 1} of the form,
\begin{equation}\label{form of sol}
v=e^{i\kappa t} v_0+\tilde v,\,\,\psi=e^{i\kappa t}\psi_0+\tilde\psi.
\end{equation}
Then $(\tilde v,\tilde\psi)$ has to satisfy the following initial boundary value problem,
\begin{equation}\label{tilde(v,psi) eq}
\left\{
\begin{array}{ll}
\partial_t\tilde v-\rho^{-1}{\rm div}\mathfrak{T}\{\breve{C}\tilde\psi\}=-e^{i\kappa t}\dot\chi(t)\tilde v_0\,=:\dot \chi(t)\tilde F_1(\kappa),\\
\partial_t\tilde\psi+\breve{\eta}^{-1}\breve{C}\tilde\psi-e[\tilde v]\breve{I}=-e^{i\kappa t}\dot\chi(t)\tilde\psi_0=:\dot\chi(t) \tilde F_2(\kappa),\\[0.25cm]
\tilde v=0\quad\text{on $\Gamma_D$},\quad(\mathfrak{T}\{\breve{C}\tilde\psi\})\nu=0\quad\text{on $\Gamma_N$},\\[0.25cm]
(\tilde v,\tilde\psi)=0\quad\text{at $t=0$}.
\end{array}
\right.
\end{equation}
Here, $\dot\chi(t):=\frac{d\chi}{dt}(t)$. As $$F(t):=(\dot \chi(t)\tilde F_1(\kappa), \dot\chi(t)\tilde F_2(\kappa))^t \in C^\infty([0,\infty);H)$$ is $0$ near $t=0$, any order of the compatibility condition for  \eqref{tilde(v,psi) eq} is satisfied. Using the semigroup, $e^{t L}$, solutions $\tilde V:=(\tilde v,\tilde\psi)^t$ take the form
\begin{equation}
\tilde V(t)=\int_0^t e^{(t-s)L}\,e^{i\kappa s}\dot\chi(s)\,ds\tilde F(\kappa),
\end{equation}
where $\tilde F(\kappa)=(\tilde F_1(\kappa),\tilde F_2(\kappa))^{\text{\it t}}$.
By a straightforward computation, we obtain
\begin{equation}\label{computation of tilde V}
\tilde V(t)=e^{tL}\left(I+\int_0^{t_0}(i\kappa I-L)\,e^{(i\kappa I-L)s}(1-\chi(s))\,ds\right)\tilde F(\kappa)=e^{tL}\left(I+O(1)\right)\,\tilde F(\kappa),
\end{equation}
where $O(1)$ denotes a term which is uniformly bounded in time. Hence, by the exponential decay of solutions for the reduced system, we have
\begin{equation}\label{lamp form}
\Vert\tilde V(t)\Vert=O(e^{-a_4 t})\,\,\text{as $t\rightarrow\infty$}.
\end{equation}
We have proved

\begin{theorem}
Let $(v(t),\psi(t))^t \in C^2([0,\infty); H^1(\Omega))\times C^2(([0,\infty); L^2(\Omega))$ be the solution of \eqref{6.1} satisfying the compatibility condition of order 2. Then, there exists a constant $a_4>0$ independent of the initial data such that $(v(t),\psi(t))^t$ converges to $e^{i\kappa t} (\tilde v_0,\tilde\psi_0)^t$ exponentially fast of order $O(e^{-a_4 t})$ as $t\rightarrow\infty$ with respect to the amended energy \eqref{amended energy}. Here $\tilde (\tilde v_0,\tilde\psi_0)^t \in H^1(\Omega)\times L^2(\Omega)$ is the unique solution of \eqref{static lamp}.
\end{theorem}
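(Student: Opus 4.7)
The plan is to reduce the inhomogeneous-boundary problem \eqref{lamp 1} to a homogeneous abstract Cauchy problem for $L$ whose forcing term is smooth and compactly supported in time, and then invoke the exponential decay of Theorem~\ref{exponential decay} through the Duhamel formula.

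First, I would establish unique solvability of the stationary time-harmonic problem \eqref{static lamp}. Theorem~\ref{semigroup for L} places the entire imaginary axis in $\rho(L)$; in particular $i\kappa\in\rho(L)$, so after lifting the inhomogeneous Dirichlet datum $\tilde f\big|_{\Gamma_D}$ by an $H^1$-extension of $\tilde f$ and absorbing the resulting bulk term into the right-hand side (which is allowed since \eqref{6.5} is solvable even for data in the dual of $K(\Omega)$, cf.~\eqref{resolvent set of L}), we obtain a unique $(\tilde v_0,\tilde\psi_0)\in H^1(\Omega)\times L^2(\Omega)$ solving \eqref{static lamp}. Setting $(v_0,\psi_0):=\chi(t)(\tilde v_0,\tilde\psi_0)$ and making the ansatz $v=e^{i\kappa t}v_0+\tilde v$, $\psi=e^{i\kappa t}\psi_0+\tilde\psi$, a direct substitution into \eqref{lamp 1} — using that $(\tilde v_0,\tilde\psi_0)$ kills the bulk terms whenever $\chi\equiv 1$ — shows that $(\tilde v,\tilde\psi)$ satisfies the homogeneous-boundary, homogeneous-initial IBVP \eqref{tilde(v,psi) eq} with forcing $F(t)=\dot\chi(t)e^{i\kappa t}(\tilde v_0,\tilde\psi_0)^t$.

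Second, because $\dot\chi$ is $C^\infty$ and supported in $[0,t_0]$, the forcing $F$ lies in $C^\infty([0,\infty);H)$ and vanishes near $t=0$; hence every compatibility condition is automatic and the analogue of Theorem~\ref{inhomog CP} with $A_Z$ replaced by $L$ gives a classical solution
\[
\tilde V(t)=\int_0^t e^{(t-s)L}F(s)\,ds.
\]
Using the identity $e^{(t-s)L}e^{i\kappa s}=e^{tL}e^{(i\kappa I-L)s}$ and integrating by parts in $s$, with $\dot\chi=-\frac{d}{ds}(1-\chi)$, gives the representation \eqref{computation of tilde V}, namely $\tilde V(t)=e^{tL}(I+\mathcal R)\tilde F(\kappa)$, where $\mathcal R$ is a bounded operator on $H$ whose construction uses only the integral over the compact interval $[0,t_0]$ and the resolvent $(i\kappa I-L)^{-1}$. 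Thus $\tilde V(t)=e^{tL}W_0$ for a fixed $W_0\in H$ depending on $\kappa$ and $\tilde f$ but independent of $t$.

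Third, I would invoke Theorem~\ref{exponential decay} to bound $\|e^{tL}W_0\|$. Approximating $W_0$ by initial data satisfying the compatibility condition of order $2$ (which is dense in $H$ since $L$ generates a $C_0$-group by Theorem~\ref{generation of group for L}), the exponential decay $\|(v,\psi)(t)\|=O(e^{-a_4 t})$ of the homogeneous reduced system extends by density to $\|e^{tL}W_0\|=O(e^{-a_4 t})$ in the $H$-norm; the equivalence of the amended energy \eqref{amended energy} with $\|\tilde v\|_\rho^2+\|\tilde\psi\|^2$ noted just after its definition transports this decay to the norm in the statement. Combined with the splitting \eqref{form of sol}, this yields $(v,\psi)\to e^{i\kappa t}(\tilde v_0,\tilde\psi_0)$ at rate $O(e^{-a_4 t})$.

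The main obstacle is the last step: Theorem~\ref{exponential decay} is stated for solutions of the homogeneous system under an order-$2$ compatibility condition, whereas $\tilde V$ arises from a Duhamel integral. The key trick is the integration by parts that converts $\tilde V(t)$ into $e^{tL}$ applied to a fixed vector, after which a density/approximation argument (using that $D(L^2)$ is dense in $H$ and the $C_0$-group property) transfers the pointwise decay to the required operator-level decay of $e^{tL}$. A secondary technical point is ensuring that the $H^1$-regularity of $\tilde v_0$ is enough for the correction term $\mathcal R$ to remain bounded uniformly in $t$, which in turn relies on the smoothing provided by $(i\kappa I-L)^{-1}$.
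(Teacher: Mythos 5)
Your reduction is the same as the paper's: solve the stationary problem \eqref{static lamp} using $i\kappa\in\rho(L)$ (cf.\ \eqref{resolvent set of L}) after lifting the Dirichlet datum, make the ansatz \eqref{form of sol} with the cutoff $\chi$, arrive at \eqref{tilde(v,psi) eq} with forcing $\dot\chi(t)e^{i\kappa t}(\tilde v_0,\tilde\psi_0)^t$ supported in $[0,t_0]$, and convert the Duhamel integral by integration by parts into the representation \eqref{computation of tilde V}, i.e.\ $\tilde V(t)=e^{tL}W_0$ with a fixed $W_0\in H$. Up to that point your argument coincides with the paper's.

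The genuine gap is your final step. Theorem~\ref{exponential decay} is not an operator-norm decay statement for $e^{tL}$ on $H$: the rate $a_4$ is uniform, but the prefactor is $2\tilde E(v(0),\psi(0))$, which is the amended (higher) energy of the initial data and hence controlled by a graph-norm-type quantity (it involves $\partial_t v(0),\partial_t\psi(0)$, i.e.\ $L$ applied to the data), not by the $H$-norm. Consequently, approximating $W_0$ in $H$ by compatible data $W_n$ gives bounds $\Vert e^{tL}W_n\Vert_H\le C_n e^{-a_4 t}$ with constants $C_n$ that you cannot keep bounded in terms of $\Vert W_n\Vert_H$, so the decay does not pass to the limit; no estimate of the form $\Vert e^{tL}\Vert_{H\to H}\le Ce^{-a_4 t}$ is established anywhere in the paper (that would require a separate argument, e.g.\ via the improved resolvent estimate \eqref{6.23} and a Gearhart--Pr\"uss type theorem). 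Moreover, even if it were, it would only give decay of $(\tilde v,\tilde\psi)$ in the $H$-norm, whereas the theorem asserts decay with respect to the amended energy \eqref{amended energy}, which also contains $\partial_t v$ and $\partial_t\psi$; your appeal to the norm equivalence stated after \eqref{amended energy} conflates $\Vert\tilde v\Vert_\rho^2+\Vert\tilde\psi\Vert^2$ (which by the paper's notation includes the time derivatives) with the $H$-norm of the correction term. The paper avoids all of this: since $F\in C^\infty([0,\infty);H)$ vanishes near $t=0$, every compatibility condition for \eqref{tilde(v,psi) eq} holds, so $\tilde V$ is a smooth solution; and since $F$ also vanishes for $t\ge t_0$, the shifted function $t\mapsto\tilde V(t+t_0)$ is a solution of the homogeneous reduced system \eqref{6.1} with smooth, compatible initial data, to which Theorem~\ref{exponential decay} applies directly — no density argument and no operator-norm decay of $e^{tL}$ are needed. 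Replacing your approximation step by this observation closes the gap and recovers the stated $O(e^{-a_4 t})$ convergence in the amended energy.
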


\bibliographystyle{siamplain}
\bibliography{references}

\end{document}